\newtheorem{theorem}{Theorem}
\newtheorem{lemma}{Lemma}
\newtheorem*{thma}{Theorem A}
\newtheorem*{thmb}{Theorem B}
\newtheorem*{thm2'}{Theorem 2$^\prime$}
\renewcommand{\S}{\mathcal{S}}
\newcommand{\vphi}{\varphi}
\newcommand{\Hess}{\operatorname{Hess}}
\newcommand{\grad}{\operatorname{grad}}
\title{Injectivity Criteria for Holomorphic Curves in $\mathbb{C}^n$}
\thanks{The authors were supported in part by FONDECYT Grant \# 1030589.}
\author{M. Chuaqui \and P. Duren \and B. Osgood}
\address{P. Universidad Cat\'olica de Chile}
\email{mchuaqui@mat.puc.cl}
\address{University of Michigan}
\email{duren@umich.edu}
\address{Stanford University}
\email{osgood@stanford.edu}
\subjclass[2000]{Primary 30C99; Secondary 31A05, 53A10}
\keywords{Holomorphic mapping, Schwarzian derivative, curvature}
\date{}
\begin{document}

\maketitle

\bibliographystyle{amsplain}

\begin{abstract}
Combining the definition of Schwarzian derivative for conformal
mappings between Riemannian manifolds given by Osgood and Stowe
with that for parametrized curves in Euclidean space given by
Ahlfors, we establish injectivity criteria for holomorphic curves
$\phi:\mathbb{D}\rightarrow\mathbb{C}^n$. The result can be
considered a generalization of a classical condition for
univalence of Nehari.

\end{abstract}


\section{Introduction} \label{section:intro}

Let $f:\mathbb{D}\rightarrow\mathbb{C}$ be a locally injective
holomorphic mapping defined in the unit disk, and let
\[
\S f=(f''/f')'-(1/2)(f''/f')^2
\]
be its Schwarzian derivative. A classical univalence criterion of
Nehari \cite{nehari:nehari-old-p} relates the size of $|Sf|$ to
the univalence of $f$. Nehari stated the result in the form:
\begin{equation} \label{eq:p-criterion}
|\S f(z)| \leq 2p(|z|)
\end{equation}
implies that $f$ is injective in $\mathbb {D}$, if $p(x)$ is a
positive, even, and continuous function defined for $x\in(-1,1)$,
with the properties
\begin{enumerate}
\item[(a)] $(1-x^2)^2p(x)$ is non-increasing for $x\in[0,1)$;
\item[(b)] the differential equation $u''+pu=0$ has no
nontrivial solutions with more than one zero in $(-1,1)$.
\end{enumerate}

Condition \eqref{eq:p-criterion} includes the criteria $|\S
f(z)|\leq \pi^2/2$ and $|\S f(z)|\leq 2(1-|z|^2)^{-2}$ from
\cite{nehari:schlicht}, but also many others. A function $p$
satisfying the hypotheses above will be referred to as a
\emph{Nehari function}. Later, in Section \ref{section:conf-sch},
we will also introduce the notion of an \emph{extremal Nehari
function}.

Before its connection with univalence was understood, the attributes
of the
Schwarzian that made it interesting are that it vanishes identically
precisely for M\"obius transformations,
\[
\S f(z) = 0 \quad \text{if and only if} \quad f(z) = \frac{az+b}{cz+d}\,,
\quad ad-bc \ne 0\,,
\]
and that it is invariant under post-composition with a M\"obius
transformation,
\[
S(f \circ g) = Sg
\]
if $f$ is M\"obius.
More generally, one has the chain rule
\begin{equation}
\S(f\circ g) = ((\S f)\circ g)(g')^2 + \S g\,.
\label{eq:S-chain-rule}
\end{equation}

Consider now a locally injective
holomorphic curve $\phi:\mathbb{D}\rightarrow\mathbb{C}^n$, $n \ge 1$.
Write $\phi =(f_1,\dots,f_n)$, with each $f_k$ holomorphic in $\mathbb{D}$,
and define the smooth real-valued function $\sigma$ on $\mathbb{D}$ by
\[
\sigma = \frac{1}{2}\log(|f_1'|^2+\cdots |f_n'|^2)\,.
\]
We define the Schwarzian derivative of $\phi$ to be
\begin{equation} \label{eq:hol-schwarzian}
\S\phi = 2(\sigma_{zz}-\sigma_z^2)\,,
\end{equation}
where
\[
\sigma_z = \frac{1}{2}\left(\frac{\partial\sigma}{\partial x} - i
\frac{\partial\sigma}{\partial y}\right)\,.
\]
This reduces to the classical Schwarzian when $n=1$, so there is no
ambiguity in using the same name and symbol. Further background on
this definition is in Section \ref{section:conf-sch}; it derives from a
generalization of the Schwarzian to conformal mappings of Riemannian manifolds.

A straightforward calculation based only on the definition
\eqref{eq:hol-schwarzian} together with   $\S$ vanishing on M\"obius
transformations shows that
\begin{equation}
\S(\phi \circ T) = ((\S \phi)\circ T)(T')^2 \label{eq:S-circ-T}
\end{equation}
when $T$ is a M\"obius transformation of $\mathbb{D}$. We will
need this in an number of places.  We do not consider $M\circ
\phi$ when $M$ is a M\"obius transformation of $\mathbb{R}^{2n}$,
nor do we have $\S(M\circ \phi)=\S\phi$, since, in general,
$M\circ \phi$ is not holomorphic and so its Schwarzian is not
defined (at least not so simply). However, there is a substitute
for M\"obius invariance that we will also need. It involves a
version of the Schwarzian introduced by Ahlfors, discussed in
Section \ref{section:S1}. A very general version of
\eqref{eq:S-chain-rule} is in Section \ref{section:conf-sch}.

Let $\Sigma = \phi(\mathbb{D})$, which we can regard as a (real) 2-dimensional
surface in $\mathbb{R}^{2n}$. The Gaussian curvature, $K(\phi(z))$, of $\Sigma$
at $\phi(z)$ is given by
\begin{equation} \label{eq:gauss-curvature}
K(\phi(z)) = -e^{-2\sigma(z)}\Delta\sigma(z)\,,
\end{equation}
and so is nonpositive. We shall prove:

\begin{theorem} \label{theorem:hol-p-criterion}
Let $p$ be a Nehari function and let $\phi: \mathbb{D} \rightarrow \mathbb{C}^n$
be holomorphic with $\phi' \ne 0$. If
\begin{equation} \label{eq:hol-p-criterion}
|\S \phi(z)| + \frac{3}{4}|\phi'(z)|^2|K(\phi(z)| \le 2p(|z|)\,,\quad z
\in \mathbb{D}\,,
\end{equation}
then $\phi$ is injective.
\end{theorem}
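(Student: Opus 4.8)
The plan is to reduce the $n$-dimensional statement in the disk to a one-dimensional statement about Ahlfors' Schwarzian $S_1$ of a curve on $(-1,1)$, and then invoke the one-sided Nehari criterion for curves established in Section~\ref{section:S1}. \emph{Reduction.} Suppose, for contradiction, that $\phi(z_1)=\phi(z_2)$ with $z_1\neq z_2$, let $\Gamma$ be the hyperbolic geodesic through $z_1$ and $z_2$, and choose a M\"obius automorphism $T$ of $\mathbb{D}$ carrying the real diameter $(-1,1)$ onto $\Gamma$ with $T(0)$ equal to the foot of the perpendicular from the origin to $\Gamma$ (achievable by sliding $T(0)$ along $\Gamma$ with a real automorphism preserving $(-1,1)$). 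Put $F=\phi\circ T$ and let $\gamma(x)=F(x)$, $x\in(-1,1)$, be the restriction of $F$ to the real axis. Since $z_1,z_2\in\Gamma=T\big((-1,1)\big)$, the curve $\gamma$ is not injective, so it suffices to prove $S_1\gamma(x)\le 2p(x)$ on $(-1,1)$ in order to contradict the curve analogue of Nehari's theorem.

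\emph{The main step.} The heart of the argument is the identity, valid for any holomorphic $\Phi:\mathbb{D}\to\mathbb{C}^n$ with $\Phi'\neq0$ and $\sigma=\tfrac12\log|\Phi'|^2$,
\[
S_1\big(\Phi|_{\mathbb{R}}\big)(x)=\operatorname{Re}\{\mathcal{S}\Phi(x)\}+\tfrac34\,|\Phi'(x)|^2\,|K(\Phi(x))|\,,
\]
where by \eqref{eq:gauss-curvature} the curvature term equals $\tfrac34\Delta\sigma=3\sigma_{z\bar z}\ge0$. I would prove this by expanding $S_1\gamma$ from its definition in Section~\ref{section:S1}, writing $\gamma'=\Phi'$, $\gamma''=\Phi''$, $\gamma'''=\Phi'''$ on the real axis, expressing the inner products $\langle\gamma',\gamma''\rangle$, $|\gamma''|^2$, $\langle\gamma',\gamma'''\rangle$ through $\sigma_x,\sigma_{xx}$ and the Hermitian data of $\Phi'$, and recognising the tangential part as $\operatorname{Re}\{2(\sigma_{zz}-\sigma_z^2)\}=\operatorname{Re}\{\mathcal{S}\Phi\}$ and the genuinely normal part of $\gamma''$ as the curvature contribution $3\sigma_{z\bar z}$. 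When $n=1$ the image is flat, $K\equiv0$, and the identity collapses to the classical fact $S_1(\Phi|_{\mathbb{R}})=\operatorname{Re}\{\mathcal{S}\Phi\}$, a check on the coefficient $\tfrac34$.

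\emph{Transfer to the hypothesis.} Granting the identity for $\Phi=F$ and bounding $\operatorname{Re}\{\mathcal{S}F\}\le|\mathcal{S}F|$ gives $S_1\gamma(x)\le|\mathcal{S}F(x)|+\tfrac34|F'(x)|^2|K(F(x))|$. Since $F=\phi\circ T$, the transformation rule \eqref{eq:S-circ-T} gives $|\mathcal{S}F(x)|=|\mathcal{S}\phi(Tx)|\,|T'(x)|^2$, the Gaussian curvature is an invariant of the common image surface, $K(F(x))=K(\phi(Tx))$, and $|F'(x)|^2=|\phi'(Tx)|^2|T'(x)|^2$. Hence the bound becomes $|T'(x)|^2\big[\,|\mathcal{S}\phi(Tx)|+\tfrac34|\phi'(Tx)|^2|K(\phi(Tx))|\,\big]\le |T'(x)|^2\,2p(|Tx|)$ by \eqref{eq:hol-p-criterion}. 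It remains to apply property (a) of the Nehari function as the subordination $|T'(x)|^2p(|Tx|)\le p(x)$: with $|T'(x)|^2=(1-|Tx|^2)^2/(1-x^2)^2$ and $P(t)=(1-t^2)^2p(t)$ this is $P(|Tx|)\le P(|x|)$, which holds because $P$ is non-increasing and $|Tx|\ge|x|$. The last inequality follows from the choice of $T$: since $T(0)$ is the foot of the perpendicular, the hyperbolic Pythagorean relation $\cosh\rho(0,Tx)=\cosh\rho(0,T0)\cosh\rho(T0,Tx)$ yields $\rho(0,Tx)\ge\rho(T0,Tx)=\rho(0,x)$. Combining everything gives $S_1\gamma(x)\le 2p(x)$, the required contradiction.

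I expect the identity of the second paragraph to be the main obstacle, since it is where Ahlfors' curve Schwarzian must be matched exactly against $\mathcal{S}\phi$ and the curvature and where the precise constant $\tfrac34$ is forced; the reduction and the Nehari subordination are then essentially bookkeeping with the transformation rules and the hyperbolic geometry.
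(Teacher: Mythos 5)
Your proposal is correct and follows essentially the same route as the paper's own proof: the identity in your second paragraph is exactly the paper's \eqref{eq:S1-curvature-special} (the $\kappa=0$ case of Lemma \ref{lemma:ahlfors-and-hol-schwarzian}, which the paper establishes via the second fundamental form, Lemma \ref{lemma:II-gaussian}, and the Chuaqui--Gevirtz formula rather than by the direct expansion you sketch), the injectivity input along curves is Theorem A, and your transfer via $\psi=\phi\circ T$ with $|T(x)|\ge |x|$, the monotonicity of $(1-x^2)^2p(x)$, and $|T'(x)|/(1-|T(x)|^2)=1/(1-x^2)$ is precisely the paper's Nehari trick. The only differences are cosmetic: you frame the reduction as a contradiction, and you justify $|T(x)|\ge|x|$ by the hyperbolic Pythagorean theorem where the paper cites a direct calculation.
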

\noindent We recover Nehari's theorem if $n=1$, since then
$\phi(\mathbb{D}) \subset \mathbb{C}$ and $K(\phi(z)) =0$.

We are also able to describe just how injectivity fails on $\partial\mathbb{D}$,
and it does so in a rather special way. To have such a statement make sense it
is first necessary to know that a mapping satisfying Theorem
\ref{theorem:hol-p-criterion} extends continuously to the boundary. In Section
\ref{section:boundary} we will give a precise analysis of the situation, but as
a preliminary result we now state:

\begin{theorem} \label{theorem:boundary-extension-1}
A holomorphic curve $\phi:\mathbb{D} \rightarrow \mathbb{C}^n$ satisfying Theorem
\ref{theorem:hol-p-criterion} has an extension to $\overline{\mathbb{D}}$ that
is uniformly continuous in the spherical metric.
\end{theorem}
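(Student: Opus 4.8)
The plan is to leverage the injectivity established in Theorem~\ref{theorem:hol-p-criterion} together with the differential-geometric control that the hypothesis \eqref{eq:hol-p-criterion} places on the curve. The key point is that the bound involves a \emph{Nehari function} $p$, and by property~(b) the equation $u''+pu=0$ is \emph{disconjugate}, so it has a positive solution on $(-1,1)$; this solution is the natural comparison function for estimating how fast $\phi$ can grow or oscillate near the boundary. My first step would be to recall the standard reduction from Nehari-type hypotheses to a Sturm comparison argument: if $u>0$ solves $u''+pu=0$, then along each ray (or each hyperbolic geodesic) the composition of $\phi$ with the flow is controlled by the ratio $u'/u$, and in particular the hyperbolic arclength of the image is finite. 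The term $\tfrac34|\phi'|^2|K|$ in \eqref{eq:hol-p-criterion} is precisely the correction that makes the one-dimensional Nehari machinery applicable to the surface $\Sigma=\phi(\mathbb{D})$.

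Concretely, I would first show that $\phi$ has \emph{finite hyperbolic-to-Euclidean distortion} in an integrated sense: fixing a point $z_0$ and a boundary point $e^{i\theta}$, I would integrate $|\phi'|$ along the radius, using \eqref{eq:hol-p-criterion} to bound the logarithmic derivative $\sigma_z$ via the comparison function $u$. The chain rule \eqref{eq:S-circ-T} under M\"obius automorphisms of $\mathbb{D}$ lets me normalize so that the comparison is uniform across all boundary directions, which is what yields \emph{uniform} (not merely pointwise) continuity. The natural target is the spherical metric rather than the Euclidean one, because $\phi$ may well run off to infinity in $\mathbb{C}^n$; the spherical metric $\tfrac{2|dw|}{1+|w|^2}$ tames this, and the finiteness of spherical arclength of the images of radii gives the existence of radial limits. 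I would then upgrade radial limits to a genuine continuous extension on $\overline{\mathbb{D}}$ by a standard equicontinuity argument on boundary arcs.

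The main obstacle I anticipate is establishing \emph{uniformity} of the modulus of continuity on $\overline{\mathbb{D}}$, as opposed to merely producing radial limits at each boundary point. Pointwise radial limits follow fairly directly from the finiteness of spherical length along radii, but uniform continuity requires controlling the spherical oscillation of $\phi$ on \emph{all} small boundary boxes simultaneously. The way to handle this is to exploit the M\"obius covariance \eqref{eq:S-circ-T}: for any automorphism $T$ of $\mathbb{D}$, the composition $\phi\circ T$ satisfies a hypothesis of the same form \eqref{eq:hol-p-criterion} (with $p$ replaced by the pulled-back comparison data), because $p$ is a Nehari function and the family of Nehari bounds is preserved under this action up to the conformal factor $(T')^2$. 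This self-similarity reduces the estimate near an arbitrary boundary point to the estimate near a single fixed point, giving the uniform bound. I expect the delicate bookkeeping to be in verifying that the curvature term $\tfrac34|\phi'|^2|K|$ transforms compatibly under $T$ so that the transformed hypothesis is again of Nehari type; once that invariance is in hand, the spherical uniform continuity follows from the single-point estimate combined with a compactness argument on $\partial\mathbb{D}$.
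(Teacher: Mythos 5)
There is a genuine gap, in two places. First, your plan hinges on ``using \eqref{eq:hol-p-criterion} to bound the logarithmic derivative $\sigma_z$ via the comparison function $u$'' and then integrating $|\phi'|$ along radii. But the hypothesis \eqref{eq:hol-p-criterion} controls second-order data ($\sigma_{zz}-\sigma_z^2$ and the curvature term), and no pointwise bound on $\sigma_z$ or $|\phi'|$ follows from it without a normalization: already for $n=1$ the function $f(z)=z/(1-z)$ has $\S f\equiv 0\le 2p$, yet $f''/f'=2/(1-z)$ and $|f'|=|1-z|^{-2}$ blow up at the boundary, so no Sturm-type comparison of $\sigma_z$ with $u'/u$ along radii can hold as stated. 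The paper's route to a distortion bound is quite different: one first reduces to an \emph{extremal} Nehari function $p$ (so that the metric $\mathbf{g}_\Phi=\Phi'(|z|)^2|dz|^2$ is complete and $p\le A$ holds), then proves via the Osgood--Stowe Schwarzian tensor that $w=\sqrt{\Phi'(|z|)/|\phi'(z)|}$ is convex relative to $\mathbf{g}_\Phi$ (Theorem \ref{theorem:convexity}, through Lemma \ref{lemma:convexity}), and only then extracts the growth estimate $|\phi'(z)|\le \Phi'(|z|)/(a\Phi(|z|)+b)^2$ of Lemma \ref{lemma:distortion} from convexity, after a case analysis on the critical points of $w$ (two or more critical points force the range into a plane, Lemma \ref{lemma:unique-critical-point}, reducing to the classical one-variable theory). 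None of this machinery, nor any substitute for it, appears in your outline.

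Second, your mechanism for uniformity and for the spherical metric is the covariance \eqref{eq:S-circ-T} under automorphisms $T$ of the \emph{domain}. That cannot work at boundary points where $\phi\to\infty$: precomposition with $T$ reparametrizes $\mathbb{D}$ but leaves the image curve unchanged, so it gives no control of Euclidean oscillation near such points, and these points are exactly why the theorem is stated in the spherical metric. What is actually needed is post-composition with M\"obius transformations $M$ of the range $\mathbb{R}^{2n}$, and the paper emphasizes that this is the delicate point: $M\circ\phi$ is in general not holomorphic and $\S(M\circ\phi)\neq\S\phi$. The substitute is Ahlfors' operator, which satisfies $S_1(M\circ\varphi)=S_1\varphi$ by \eqref{eq:S1-invariance}; in the case where $w$ has no critical point the paper uses this invariance, via Lemma \ref{lemma:ahlfors-and-hol-schwarzian}, to get convexity of the radial functions $\omega_\theta$ for a suitable range shift $M$, hence a sector-by-sector analogue of \eqref{eq:distortion}, and then uniform spherical continuity by compactness of $\partial\mathbb{D}$. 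Your proposal, by relying only on the domain action (whose compatibility with the curvature term is in fact the easy part, already checked in the proof of Theorem \ref{theorem:hol-p-criterion}), misses the one invariance property that makes the spherical statement provable.
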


Thus a function $\phi$ satisfying the condition
\eqref{eq:hol-p-criterion} maps the unit circle to a continuous
closed curve $\Gamma\subset{\mathbb{C}}^n\cup\{\infty\}$. We say
that $\phi$ is an \emph{extremal function} for the criterion if
$\Gamma$ is \emph{not} a {simple} closed curve. In this case there
is a pair of points $\zeta_1, \zeta_2 \in \partial\mathbb{D}$ with
$f(\zeta_1)=f(\zeta_2)=P$, and one says that $P$ is a \emph{cut
point} of $\Gamma$. We now have the following characteristic
property of extremal mappings.

\begin{theorem}
\label{theorem:extremal}
Under the hypotheses of Theorem 1, suppose the
closed curve $\Gamma = \phi(\partial\Bbb D)$ is not simple and let $P$ be a
cut point.
Then there exists a Euclidean circle or line $C$ such that $C\setminus\{P\}
\subset
\Sigma$. Furthermore, equality holds in \eqref{eq:hol-p-criterion} along
$\phi^{-1}
(C \setminus\{P\})$.
\end{theorem}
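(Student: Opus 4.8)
The plan is to reduce everything to the restriction of $\phi$ to the geodesic diameter joining the two preimages of the cut point, and then to extract geometric rigidity from the equality case of the one-dimensional comparison argument underlying Theorem \ref{theorem:hol-p-criterion}. First I would normalize: since $\phi(\zeta_1)=\phi(\zeta_2)=P$ with $\zeta_1,\zeta_2\in\partial\mathbb{D}$, choose a M\"obius transformation $T$ of $\mathbb{D}$ carrying $\pm 1$ to $\zeta_1,\zeta_2$. By the covariance \eqref{eq:S-circ-T} the Schwarzian of $\phi\circ T$ is controlled just as that of $\phi$, and by passing to the extremal Nehari functions of Section \ref{section:conf-sch} the hypothesis \eqref{eq:hol-p-criterion} is preserved along the real diameter. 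Thus it suffices to treat the curve $\gamma(x)=\phi(x)$, $x\in(-1,1)$, which by Theorem \ref{theorem:boundary-extension-1} extends continuously with $\gamma(-1)=\gamma(1)=P$.

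Next I would invoke Ahlfors' Schwarzian of the curve $\gamma$, as developed in Section \ref{section:S1}. The crucial analytic input is the identity expressing the real (scalar) part of Ahlfors' Schwarzian of $\gamma$ along $(-1,1)$ through the holomorphic data,
\[
\operatorname{Re}\{\S\phi(x)\}+\tfrac{3}{4}|\phi'(x)|^2|K(\phi(x))|\,,
\]
so that the left-hand side of \eqref{eq:hol-p-criterion} dominates it. The injectivity proof of Theorem \ref{theorem:hol-p-criterion} then amounts to comparing a solution of $u''+pu=0$ with the auxiliary function attached to $\gamma$; the failure of $\gamma$ to be simple at the endpoints forces this solution to vanish at both $x=\pm1$, that is, to realize the extremal configuration permitted by Nehari condition (b).

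From here the argument is a rigidity analysis of the equality case. Because the comparison solution attains the extremal vanishing at $\pm1$, every inequality in the chain must be an equality along the whole diameter. Decoding these equalities should yield two facts: first, that equality holds in \eqref{eq:hol-p-criterion} for all $x\in(-1,1)$; and second, that $\S\phi$ is real and nonnegative there while the normal (non-scalar) components of Ahlfors' Schwarzian of $\gamma$ vanish identically. By the characterization of curves with purely scalar Ahlfors Schwarzian from Section \ref{section:S1}, the latter forces $\gamma((-1,1))$ to lie on a Euclidean circle or line $C$. Since $\gamma(\pm1)=P$ and $\gamma$ is injective on the open interval, $\gamma((-1,1))=C\setminus\{P\}\subset\Sigma$, and equality in \eqref{eq:hol-p-criterion} holds precisely along $\phi^{-1}(C\setminus\{P\})$, the image of the diameter. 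Undoing the normalization $T$, which carries circles and lines to circles and lines, gives the statement for the original $\phi$.

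The main obstacle I anticipate is this last rigidity step: showing that saturating a single scalar bound forces the vector-valued curve onto a circle. The inequality \eqref{eq:hol-p-criterion} couples the modulus of the Schwarzian to the curvature term, and one must verify that it can be saturated only when the normal components of Ahlfors' Schwarzian vanish, i.e.\ that the contribution $\tfrac{3}{4}|\phi'|^2|K|$ is exactly what measures the deviation of $\gamma$ from a circle. Making this decoupling precise, and propagating the pointwise equality to the entire diameter rather than to isolated points, is the heart of the proof.
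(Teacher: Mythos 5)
Your skeleton---move the cut points to $\pm1$ by a M\"obius transformation of $\mathbb{D}$, restrict to the diameter, use the relation \eqref{eq:S1-curvature-special} between $S_1$ and $\S$, and then run a rigidity analysis---is the same as the paper's, but there is a genuine gap at exactly the step you flag as the ``heart of the proof,'' and the mechanism you propose there is not the right one. The paper does not obtain the circle by showing that saturation of \eqref{eq:hol-p-criterion} kills ``normal (non-scalar) components of Ahlfors' Schwarzian.'' No such components are defined anywhere in the paper ($S_1$ is a scalar by construction, generalizing only the real part of the classical Schwarzian), Section \ref{section:S1} contains no characterization of curves with purely scalar Ahlfors Schwarzian, and in fact the pointwise identity $S_1\varphi=2p$ for the scalar operator does not by itself force a curve in $\mathbb{R}^{2n}$ onto a circle (a constant-speed, constant-curvature helix satisfies $S_1\varphi=\pi^2/2$ with $p=\pi^2/4$). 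What the paper actually uses is Theorem B of Chuaqui--Gevirtz, quoted as a black box: if $S_1\varphi\le 2p$ on $(-1,1)$, $\varphi$ is suitably normalized (arranged by post-composing with a M\"obius transformation $M$ of $\mathbb{R}^{2n}$, harmless since $S_1(M\circ\varphi)=S_1\varphi$), and $\varphi$ is \emph{not} univalent on $[-1,1]$, then $\varphi=R\circ\Phi$. This single statement delivers both conclusions at once: the image of the diameter is a M\"obius image of $\mathbb{R}\cup\{\infty\}$, hence a Euclidean circle or line through $P$; and $S_1\varphi\equiv 2p$, which combined with \eqref{eq:S1-curvature-special} and \eqref{eq:hol-p-criterion} forces equality in \eqref{eq:hol-p-criterion}. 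Note the logical order is the opposite of yours: the functional form (circle) comes first, from Theorem B, and equality in \eqref{eq:hol-p-criterion} is deduced afterwards. Your plan---first extract equality from a Sturm-type equality analysis (which amounts to re-deriving Chuaqui--Gevirtz without proof), then decode circularity from that equality---cannot be completed with the tools available in the paper.

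A secondary problem is the normalization step. For a general Nehari function the criterion \eqref{eq:hol-p-criterion} is \emph{not} invariant under M\"obius self-maps of $\mathbb{D}$, and ``passing to extremal Nehari functions'' is irrelevant here: extremality concerns multiples $kp$, not M\"obius covariance. What the Nehari trick \eqref{eq:nehari-trick} gives, for the special transformation $T(z)=(z-i\rho)/(1+i\rho z)$, is only the weakened bound $S_1(\phi\circ T)(x)\le 2p(|T(x)|)\,|T'(x)|^2\le 2p(x)$ along $(-1,1)$, possibly with strict inequality somewhere; the paper confronts this with a case analysis, showing that when $(1-x^2)^2p(x)$ is not constant the strict inequality would contradict Theorem B, so the cut points must in fact lie on a diameter (and when it is constant, the criterion is fully M\"obius invariant). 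Your weakened transfer would actually suffice for a streamlined argument, since the chain $2p(x)=S_1\psi(x)\le|\S\psi(x)|+\frac{3}{4}|\psi'(x)|^2|K|\le 2p(|T(x)|)\,|T'(x)|^2\le 2p(x)$ collapses to equalities and the middle equality is precisely equality in \eqref{eq:hol-p-criterion} at $T(x)$---but that collapse is triggered only by $S_1\psi\equiv2p$, which once again comes from Theorem B, the ingredient your proposal never invokes.
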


In addition to these results, in Section \ref{section:covering} we
will derive a covering theorem for holomorphic curves satisfying
\eqref{eq:hol-p-criterion} that generalizes some one-dimensional
results. In Section \ref{section:example} we will construct
examples showing that the criterion is sharp.

\medskip

When we began our work on generalizing Nehari's theorem it was in a rather
different context, namely a harmonic mapping $f$ of $\mathbb{D}$ and its
Weierstrass-Enneper lift $\widetilde{f}$ mapping $\mathbb{D}$ to a  minimal
surfaces in $\mathbb{R}^3$, see \cite{cdo:harmonic-lift}. The lift $\widetilde{f}$
is a conformal mapping of $\mathbb{D}$, say with conformal factor $e^\sigma$,
and again there is a generalization of the Schwarzian derivative, $\S f =
2(\sigma_{zz}-\sigma_z^2)$, just as in \eqref{eq:hol-schwarzian}. One then has:
If $p$ is a Nehari function and
\begin{equation} \label{eq:harmonic-p-criterion}
|\S f(z)| + e^{2\sigma(z)}|K(\widetilde{f}(z))| \le 2p(|z|)\,, \quad
z\in\mathbb{D}\,,
\end{equation}
then $\widetilde{f}$ is injective in $\mathbb{D}$. Here
$K(\widetilde{f}(z))$ is the Gaussian curvature of the minimal surface
$\widetilde{f}(\mathbb{D})$ at $\widetilde{f}(z)$. There are also results
analogous to Theorems 2 and 3.

It was very surprising, to us at least, that such similar
statements hold in these two different settings.   Comparing
\eqref{eq:hol-p-criterion} and \eqref{eq:harmonic-p-criterion} the
most visible difference is in the multiple of the curvature terms,
$3/4$ in the former and $1$ in the latter. This is a reflection of
the shared nature of the proofs, with small changes in the
constants in some of the preliminary results. Things did not have to be this way, one might have thought.
The cause of this
commonality comes from the differential geometry of the
holomorphic curve as a surface in $\mathbb{R}^{2n}$, to be
explained in Section \ref{section:S1}.

We have certainly borrowed from the exposition in
\cite{cdo:harmonic-lift}, but while the two papers run a parallel
course in many -- but not all -- ways, we have tried to make this
paper reasonably self-contained. There are a few instances where we
refer to \cite{cdo:harmonic-lift} for details that would have been reproduced
verbatim here.

\section{Ahlfors' Schwarzian and the Second Fundamental Form} \label{section:S1}

We begin the same way as in \cite{cdo:harmonic-lift}, with
Ahlfors' Schwarzian for curves in $\mathbb{R}^m$ and its
relationship to curvature, but here we find an important
difference with what was done in the case of harmonic maps. Here
to make use of the properties of Ahlfors' operator to study
injectivity we must relate the second fundamental form of the
holomorphic curve as a surface in $\mathbb{R}^{2n}$ to its
Gaussian curvature.

Ahlfors \cite{ahlfors:schwarzian-rn} defined a Schwarzian
derivative for mappings $\varphi : (a,b)\rightarrow {\Bbb R}^m$ of
class $C^3$ with $\varphi'(x)\neq0$ by generalizing separately the
real and imaginary parts of the Schwarzian for analytic functions.
We only need the operator corresponding to the real part, which is
\[
S_1\varphi = \frac{\langle
\varphi',\varphi'''\rangle}{|\varphi'|^2} - 3\frac{\langle
\varphi',\varphi''\rangle^2}{|\varphi'|^4} +
\frac32\frac{|\varphi''|^2}{|\varphi'|^2}\,,
\]
where $\langle\cdot\, ,\cdot\, \rangle$ denotes the Euclidean
inner product. If $T$ is a M\"obius transformation of $\mathbb{R}^m$ then,
as Ahlfors showed,
\begin{equation}
S_1(T \circ \varphi)(t) = S_1\vphi(t)\,. \label{eq:S1-invariance}
\end{equation}
We also record the fact that if  $\gamma(t)$ is a smooth
function with ${\gamma}'(t) \ne 0$  then
\begin{equation*}
S_1(\vphi \circ \gamma)
 = ((S_1\vphi) \circ \gamma)(\gamma')^2 + \S
\gamma\,,\label{eq:S_1-chain-rule}
\end{equation*}
analogous to the chain rule \eqref{eq:S-chain-rule} for the analytic
Schwarzian.

As in the introduction, let $\phi :\mathbb{D} \rightarrow
\mathbb{C}^n$ be a holomorphic curve, and let
$\Sigma=\phi(\mathbb{D})$.  Ahlfors' Schwarzian enters the proof
of Theorem \ref{theorem:hol-p-criterion} for two reasons. First,
it is related to $\S \phi$ via the geometry of $\Sigma$ as a
surface in $\mathbb{R}^{2n}$. Second, bounds on $S_1\vphi$ imply
injectivity along curves, and this will be sufficient to prove
injectivity in $\mathbb{D}$. We take up the first point in this
section and the second point in Section \ref{section:injectivity}.

We recall the notion of the second fundamental form of a
submanifold. Let $M$ be a submanifold of $\mathbb{R}^m$ with the
metric induced by the Euclidean metric on $\mathbb{R}^m$. Let $D$
be the covariant derivative on $\mathbb{R}^m$ and let $D'$ be the
covariant derivative on $M$. If $X$ and $Y$ are vector fields
tangent to $M$ then $D_XY$ need not be tangent to $M$ but rather
has components tangent and normal to $M$:
\[
D_XY = D'_XY+I\!I(X,Y)\,.
\]
The normal component is the second fundamental form,
$I\!I(X,Y)$. It is a tensor.

For holomorphic curves the second fundamental form is related to
the Gaussian curvature of $\Sigma$ in the following way.

\begin{lemma} \label{lemma:II-gaussian}
Let
$\phi:\mathbb{D}\rightarrow\mathbb{C}^n$ be holomorphic with
$\phi'\neq 0$, and let $V(x)=\phi'(x)/|\phi'(x)|$, $x\in(-1,1)$. Then along $\phi$
the second fundamental form of $\Sigma=\phi(\mathbb{D})$ satisfies
\begin{equation} \label{eq:II-gaussian}
|I\!I(V,V)|^2=\frac12|K(\phi)|\,.
\end{equation}
\end{lemma}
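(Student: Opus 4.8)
The plan is to compute both sides of the claimed identity $|I\!I(V,V)|^2 = \tfrac{1}{2}|K(\phi)|$ directly in terms of the holomorphic data, and verify they agree. I would start by setting up an orthonormal frame for the tangent plane to $\Sigma$. Since $\phi$ is holomorphic and conformal onto its image, the real and imaginary parts of $\phi'/|\phi'|$ span the tangent plane orthonormally (holomorphicity forces $\langle \operatorname{Re}\phi', \operatorname{Im}\phi'\rangle = 0$ and $|\operatorname{Re}\phi'| = |\operatorname{Im}\phi'|$, via the Cauchy--Riemann relations). Along the real axis, $V = \phi'/|\phi'|$ is (up to normalization) the tangent vector in the $x$-direction, and $JV$ (rotation by $i$) is the orthogonal tangent vector. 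The conformal factor is $e^\sigma = |\phi'|$.

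\medskip

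Next I would compute $I\!I(V,V)$ explicitly. By definition $I\!I(V,V)$ is the normal component of $D_V V$, where $D$ is the flat covariant derivative in $\mathbb{R}^{2n}$. Parametrizing the curve $x \mapsto \phi(x)$ and differentiating the unit tangent, the relevant second-order object is the normal part of $\partial_x(\phi'/|\phi'|)$, which in complex notation reduces to the normal projection of $\phi''$ suitably normalized by $|\phi'|$. The key simplification is that for a \emph{holomorphic} curve the second fundamental form is highly constrained: writing things out, $I\!I(V,V)$ and $I\!I(V,JV)$ are controlled by a single normal vector built from $\phi''$ (after subtracting its tangential part), and the two are related by the complex structure, giving $I\!I(JV,JV) = -I\!I(V,V)$ and $I\!I(V,JV) = $ the companion vector. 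This is the manifestation of the Codazzi/minimality-type symmetry special to holomorphic (hence minimal) immersions.

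\medskip

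Then I would connect $|I\!I(V,V)|^2$ to the Gaussian curvature via the Gauss equation. For a surface in $\mathbb{R}^{2n}$ the Gauss equation gives $K = \langle I\!I(e_1,e_1), I\!I(e_2,e_2)\rangle - |I\!I(e_1,e_2)|^2$ for an orthonormal tangent frame $\{e_1,e_2\}$. Taking $e_1 = V$, $e_2 = JV$ and inserting the holomorphic relations $I\!I(JV,JV) = -I\!I(V,V)$ and $|I\!I(V,JV)| = |I\!I(V,V)|$, the right-hand side collapses to $-|I\!I(V,V)|^2 - |I\!I(V,V)|^2 = -2|I\!I(V,V)|^2$. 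Since $K \le 0$ this yields $|K| = 2|I\!I(V,V)|^2$, i.e.\ $|I\!I(V,V)|^2 = \tfrac{1}{2}|K(\phi)|$, as claimed.

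\medskip

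I expect the main obstacle to be establishing the holomorphic symmetry relations $I\!I(JV,JV) = -I\!I(V,V)$ and $|I\!I(V,JV)| = |I\!I(V,V)|$ cleanly, rather than the Gauss equation step which is then purely formal. The cleanest route is probably to express everything through the complexified second derivative: in the conformal parametrization the normal second fundamental form is essentially the normal projection of $\partial_z^2 \phi = \phi''$, and the reality/holomorphicity structure of $\phi''$ (it has no $\bar z$ dependence, and its tangential part is fixed by $\sigma$) forces the trace-free, conformally symmetric behavior above. Care is needed to keep the normalization by $|\phi'| = e^\sigma$ consistent so that the factor of $\tfrac{1}{2}$ comes out correctly and matches the curvature formula \eqref{eq:gauss-curvature}.
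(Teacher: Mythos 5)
Your argument is correct, but it takes a genuinely different route from the paper's proof. You work structurally: with the orthonormal tangent frame $\{V, JV\}$ (where $J$ is the complex structure of $\mathbb{C}^n\cong\mathbb{R}^{2n}$; the paper's second frame vector $Y=e^{-\sigma}\phi_y$ is exactly $JV$), you use the $J$-symmetry of the second fundamental form of a complex submanifold to get $I\!I(JV,JV)=-I\!I(V,V)$ and $I\!I(V,JV)=J\,I\!I(V,V)$, and then the Gauss equation gives $K=\langle I\!I(V,V),I\!I(JV,JV)\rangle-|I\!I(V,JV)|^2=-2|I\!I(V,V)|^2$, which is the lemma (and the sign $K\le 0$ for free). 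The symmetry step you flag as the main obstacle is in fact a two-line computation and not an obstacle at all: since $T\Sigma$ is $J$-invariant and $J$ is parallel and orthogonal, $J$ also preserves the normal bundle, so $I\!I(X,JY)=\bigl(D_X(JY)\bigr)^{\perp}=\bigl(J D_X Y\bigr)^{\perp}=J\bigl(D_X Y\bigr)^{\perp}=J\,I\!I(X,Y)$. The paper, by contrast, never invokes the Gauss equation: it computes the normal projection explicitly, $I\!I(V,V)=e^{-2\sigma}(\phi_{xx}-\sigma_x\phi_x+\sigma_y\phi_y)$, reduces $e^{6\sigma}|I\!I(V,V)|^2$ to the Wronskian expression $\sum_{i<j}|f_i'f_j''-f_j'f_i''|^2$, computes $\Delta\sigma$ in the same terms, and compares via \eqref{eq:gauss-curvature}. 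Your route is shorter, explains conceptually where the factor $\tfrac12$ comes from (the curvature is shared equally between $I\!I(V,V)$ and its rotation $J\,I\!I(V,V)$), and generalizes verbatim to complex curves in any K\"ahler ambient manifold. What the paper's computation buys is the explicit identity \eqref{eq:Laplacian-sigma}, $|K|=2e^{-6\sigma}\sum_{i<j}|f_i'f_j''-f_j'f_i''|^2$, which is reused later (Lemma \ref{lemma:unique-critical-point}) to show that $K\equiv 0$ along an arc forces the range of $\phi$ into a plane; your argument produces no such byproduct. One correction to your setup: the tangent plane to $\Sigma$ is spanned by $\phi'$ and $i\phi'$ (i.e.\ by $V$ and $JV$), not by $\operatorname{Re}\phi'$ and $\operatorname{Im}\phi'$; your parenthetical claim that $\langle\operatorname{Re}\phi',\operatorname{Im}\phi'\rangle=0$ and $|\operatorname{Re}\phi'|=|\operatorname{Im}\phi'|$ already fails for $\phi(z)=z$ in $\mathbb{C}$. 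The fact you actually need and use is the Cauchy--Riemann identity $\phi_y=i\phi_x$, so this slip does no harm, but it should be restated correctly.
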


\begin{proof}
We need to find the components of $D_VV$ tangent and normal to
$\Sigma$. First, using $V(x) =
\phi'(x)/|\phi'(x)|=e^{-\sigma(x)}\phi'(x)$ we have, along
$(-1,1)$,
\begin{equation} \label{eq:DVV}
D_VV=e^{-\sigma}(e^{-\sigma}\phi')'=
e^{-\sigma}(e^{-\sigma}\phi''-e^{-\sigma}\sigma_x\phi')
=e^{-2\sigma}(\phi''-\sigma_x\phi') = e^{-2\sigma}(\phi_{xx}-\sigma_x\phi_x)\, .
\end{equation}
Next let $Y$ be the vector field $e^{-\sigma}\phi_y$ along
$(-1,1)$. Since $\phi$ is conformal the pair $\{V,Y\}$ is
an orthonormal frame for $\Sigma$ along the curve $\phi((-1,1))$.
In order to determine the component of $D_VV$ normal to $\Sigma$
we will compute $\langle D_VV,V\rangle$ and $\langle
D_VV,Y\rangle$.

>From $\langle V,V\rangle= 1$ it follows that $\langle
D_VV,V\rangle=0$. Then,
\[
\langle D_VV,Y\rangle = e^{-3\sigma}\langle \phi_{xx}-\sigma_x\phi_x, \phi_y\rangle
=e^{-3\sigma}\langle \phi_{xx},\phi_y\rangle \, ,
\]
because $\langle \phi_x,\phi_y\rangle =0$. But $\phi$ is also
harmonic, hence
\[
\begin{aligned}
\langle D_VV,Y\rangle &= - e^{-3\sigma}\langle \phi_{yy},\phi_y\rangle
=-\frac12\, e^{-3\sigma}\frac{\partial}{\partial y}\langle \phi_y,\phi_y\rangle\\
&=-\frac12\,e^{-3\sigma}\frac{\partial}{\partial
y}(e^{2\sigma})-e^{-\sigma}\sigma_y \, .
\end{aligned}
\]
It follows that
\[
D_VV = -e^{-\sigma}\sigma_yY+I\!I(V,V) \, ,
\]
that is, from \eqref{eq:DVV},
\[
I\!I(V,V) =
e^{-2\sigma}(\phi_{xx}-\sigma_x\phi_x+\sigma_y\phi_y) \, .
\]
Therefore
\[
\begin{aligned}
e^{4\sigma}|\Pi(V,V)|^2
&=|\phi_{xx}|^2+e^{2\sigma}\sigma_x^2+e^{2\sigma}\sigma_y^2
-2\sigma_x\langle\phi_{xx},\phi_x\rangle+2\sigma_y\langle\phi_{xx},\phi_y\rangle\\
&=|\phi_{xx}|^2+e^{2\sigma}(\sigma_x^2+\sigma_y^2)-2\sigma_x\langle
\phi_{xx},\phi_x\rangle-2\sigma_y\langle\phi_{yy},\phi_y\rangle\\
&=
|\phi_{xx}|^2+e^{2\sigma}(\sigma_x^2+\sigma_y^2)-\sigma_x
\frac{\partial}{\partial x}(e^{2\sigma})
-\sigma_y\frac{\partial}{\partial y}(e^{2\sigma})\\
&= |\phi_{xx}|^2 - e^{2\sigma}(\sigma_x^2+\sigma_y^2)\,.
\end{aligned}
\]
More compactly,
\begin{equation} \label{eq:|II|^2}
e^{4\sigma}|I\!I(V,V)|^2=|\phi_{xx}|^2-e^{2\sigma}|\nabla\sigma|^2 \, .
\end{equation}

On the other hand, using $\phi=(f_1,\ldots,f_n)$ and  $\sigma =
(1/2\log\left(\,|f_1'|^2+\cdots+|f_n'|^2\right)$, together with
$\phi_x=\phi'=(f_1',\ldots,f_n')$ and
$\phi_{xx}=(f_1'',\ldots,f_n'')$, we obtain
\begin{equation} \label{eq:sigma_z}
2\sigma_{z}=\frac{\overline{f_1'}f_1''+\cdots\overline{f_n'}f_n''}
{|f_1'|^2+\cdots+|f_n'|^2} \, ,
\end{equation}
and after some algebra,
\[
|\nabla\sigma|^2=|2\sigma_{z}|^2=e^{-4\sigma}\left|\sum_i
\overline{f_i'}f_i''\right|^2
\, .
\]
This inserted in \eqref{eq:|II|^2} gives
\begin{equation} \label{eq:|II|^2-alternate}
e^{6\sigma}|\Pi(V,V)|^2=\sum_i\left|f_i''\right|^2\sum_j\left|f_j'\right|^2
-\sum_i\overline{f_i'}f_i''\sum_jf_j'\overline{f_j''}
=\sum_{i<j}\left|f_i'f_j''-f_j'f_i''\right|^2 \, . 
\end{equation}

Finally we compute $K(\phi(x))=-e^{-2\sigma(x)}\Delta\sigma(x)$. From \eqref{eq:sigma_z} it
follows that
\[
\frac12\Delta\sigma=2\sigma_{z\bar{{z}}}=\frac{|f_1''|^2+\cdots+|f_n''|^2}
{|f_1'|^2+\cdots+|f_n'|^2}-
\frac{\left|\overline{f_1'}f_1''+\cdots\overline{f_n'}f_n''\right|^2}
{\left(\,|f_1'|^2+\cdots+|f_n'|^2\right)^2}\,,
\]
and after some manipulation one obtains
\begin{equation} \label{eq:Laplacian-sigma}
\Delta\sigma=2e^{-4\sigma}\sum_{i<j}\left|f_i'f_j''-f_j'f_i''\right|^2
\, . 
\end{equation}
Comparing \eqref{eq:|II|^2-alternate} with
\eqref{eq:Laplacian-sigma}
 we deduce that
\[
|\Pi(V,V)|^2=\frac12|K(\phi)| \, ,
\]
as desired.

\end{proof}

Ahlfors' Schwarzian $S_1$ and the holomorphic Schwarzian $\S$ from
\eqref{eq:hol-schwarzian} appear together in the following relationship.

\begin{lemma} \label{lemma:ahlfors-and-hol-schwarzian}
Let $\phi :\mathbb{D} \rightarrow \mathbb{C}^n$ be holomorphic
with $\phi' \ne 0$. Let $\gamma(t)$ be a Euclidean arc-length
parametrized curve in $\mathbb{D}$ with curvature $\kappa(t)$, and
let $\vphi(t) = \phi(\gamma(t))$ be the corresponding
parametrization of $\Gamma = \phi(\gamma)$ on
$\Sigma=\phi(\mathbb{D})$. Let $V(t)$ be the Euclidean unit
tangent vector field along $\vphi(t)$, given by
\[
V(t) = \frac{\vphi'(t)}{|\vphi'(t)|} = \frac{\phi'({\gamma}(t)){\gamma}'(t)}
{|\phi'({\gamma}(t))|}\,.
\]
Then
\begin{equation} \label{eq:S1-curvature-general}
S_1\vphi(t)  = \text{Re}\{\S \phi({\gamma}(t))({\gamma}'(t))^2\}
+\frac{3}{4}|\phi'({\gamma}(t))|^2 K(\vphi(t)) +
\frac{1}{2}\kappa^2(t)\,.
\end{equation}
\end{lemma}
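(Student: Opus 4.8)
My plan is to verify \eqref{eq:S1-curvature-general} pointwise by expanding Ahlfors' operator directly, extracting all of the simplification from the arc-length normalization of $\gamma$ and from the Hermitian structure of $\mathbb{C}^n$. Fix $t$ and write $z_0=\gamma(t)$ and $w=\gamma'(t)$, so $|w|=1$. Differentiating $\vphi=\phi\circ\gamma$ gives $\vphi'=\phi'w$, $\vphi''=\phi''w^2+\phi'\gamma''$, and $\vphi'''=\phi'''w^3+3\phi''w\gamma''+\phi'\gamma'''$, with $\phi',\phi'',\phi'''$ evaluated at $z_0$. Because $\gamma$ is parametrized by arc length, $\gamma''=i\kappa\,w$ and $\gamma'''=(i\kappa'-\kappa^2)w$; thus every $\gamma$-derivative is a scalar multiple of $w$, and the only data about the plane curve that survive are $\kappa$ and $\kappa'$.

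I would then substitute these expressions into
\[
S_1\vphi=\frac{\langle\vphi',\vphi'''\rangle}{|\vphi'|^2}-3\frac{\langle\vphi',\vphi''\rangle^2}{|\vphi'|^4}+\frac32\frac{|\vphi''|^2}{|\vphi'|^2},
\]
reading each Euclidean inner product on $\mathbb{R}^{2n}$ as the real part of the Hermitian product on $\mathbb{C}^n$. Since $|w|=1$, multiplication by $w$ is an isometry, so $|\vphi'|^2=|\phi'|^2=e^{2\sigma}$, and after expansion the mixed terms organize around the complex quantities $\sum_k f_k'''\,\overline{f_k'}$, $\sum_k f_k''\,\overline{f_k'}$, and $\sum_k|f_k''|^2$, weighted by powers of $w,\overline{w}$, together with explicit $\kappa$- and $\kappa'$-dependent pieces. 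Three bookkeeping facts drive the argument: the $\kappa'$ contribution is purely imaginary and dies under $\text{Re}$; the terms linear in $\kappa$ (each proportional to the derivative of $\sigma$ in the normal direction $iw$) cancel between $\langle\vphi',\vphi'''\rangle/|\vphi'|^2$ and $\tfrac32|\vphi''|^2/|\vphi'|^2$; and the remaining $\kappa$-dependence collapses to exactly $\tfrac12\kappa^2$, which is the last term of \eqref{eq:S1-curvature-general}.

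It remains to match the purely $\phi$-dependent remainder with $\text{Re}\{\S\phi\,w^2\}$ and the curvature term. Here I would differentiate the identity $\sum_k\overline{f_k'}\,f_k''=2\sigma_z e^{2\sigma}$, which is \eqref{eq:sigma_z}: a further $\partial_z$ yields $\sum_k f_k'''\,\overline{f_k'}=2e^{2\sigma}(\sigma_{zz}+2\sigma_z^2)$, while $\partial_{\bar z}$ yields $\sum_k|f_k''|^2=2e^{2\sigma}(\sigma_{z\bar z}+2|\sigma_z|^2)$. Substituting these, the $|\sigma_z|^2$ contributions cancel, the $\sigma_{zz}-\sigma_z^2$ combination assembles (with its $w^2$ factor) into $\text{Re}\{2(\sigma_{zz}-\sigma_z^2)w^2\}=\text{Re}\{\S\phi\,w^2\}$, and the surviving multiple of $\sigma_{z\bar z}=\tfrac14\Delta\sigma$ produces the Gaussian-curvature term in \eqref{eq:S1-curvature-general}, via $K=-e^{-2\sigma}\Delta\sigma$ (equivalently Lemma \ref{lemma:II-gaussian}, whose factor is the source of the coefficient $\tfrac34|\phi'|^2$ and explains the $3/4$-versus-$1$ discrepancy with the harmonic-lift setting). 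The main obstacle is organizational rather than conceptual: I must track the $w,\overline{w}$ weights and the real-versus-Hermitian inner products carefully enough that the constants $2$, $\tfrac34$, and $\tfrac12$ emerge exactly, confirming the clean cancellation of the mixed $\kappa$-terms and of the $|\sigma_z|^2$ terms rather than settling for estimates.
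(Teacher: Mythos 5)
Your proposal is correct, and I verified that the computation closes: with $\gamma''=i\kappa w$, $\gamma'''=(i\kappa'-\kappa^2)w$ one gets $\langle\vphi',\vphi'''\rangle e^{-2\sigma}=\text{Re}\{w^2\sum_k f_k'''\overline{f_k'}\}e^{-2\sigma}-6\kappa\,\text{Im}\{\sigma_z w\}-\kappa^2$, $\langle\vphi',\vphi''\rangle e^{-2\sigma}=2\,\text{Re}\{\sigma_z w\}$, and $\tfrac32|\vphi''|^2e^{-2\sigma}=\tfrac32 e^{-2\sigma}\sum_k|f_k''|^2+6\kappa\,\text{Im}\{\sigma_z w\}+\tfrac32\kappa^2$, so the $\kappa$-linear terms cancel exactly as you claim, the $\kappa'$ term is purely imaginary, and your two Wirtinger identities (both correct consequences of differentiating \eqref{eq:sigma_z}) yield $S_1\vphi=\text{Re}\{\S\phi\,w^2\}+3\sigma_{z\bar z}+\tfrac12\kappa^2$ with $3\sigma_{z\bar z}=\tfrac34\Delta\sigma=\tfrac34 e^{2\sigma}|K|$. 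However, your route is genuinely different from the paper's. The paper does not expand Ahlfors' operator at all: it starts from the Chuaqui--Gevirtz representation \eqref{eq:S1-Ch-G}, splits the ambient curvature of $\vphi$ as $k^2=k_i^2+|I\!I(V,V)|^2$ in \eqref{eq:k-and-ki}, converts the geodesic curvature $k_i$ by the conformal-change formula $e^\sigma\hat\kappa=\kappa-\langle\nabla\sigma,\mathbf{n}\rangle$, and only at the last step invokes Lemma \ref{lemma:II-gaussian} to turn $\tfrac12(|K|+|I\!I(V,V)|^2)$ into $\tfrac34|K|$. Your direct expansion buys self-containedness: it needs neither \eqref{eq:S1-Ch-G}, nor the second fundamental form, nor Lemma \ref{lemma:II-gaussian} (so your parenthetical crediting that lemma with the $3/4$ is not quite apt --- in your argument the constant arises from $\tfrac32\cdot\tfrac12$ via $\sigma_{z\bar z}=\tfrac14\Delta\sigma$), and it also lands unambiguously on $|K|$, silently correcting the paper's sign slip of writing $K$ in \eqref{eq:S1-curvature-general} versus $|K|$ in \eqref{eq:S1-curvature-special}. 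What the paper's geometric route buys in exchange is conceptual information your computation hides: the intermediate decomposition into $k_i$ and $|I\!I(V,V)|^2$ is reused verbatim in the proof of the covering Theorem \ref{theorem:covering}, and the relation $|I\!I(V,V)|^2=\tfrac12|K|$ is precisely what explains why the constant here is $3/4$ while the harmonic-lift setting (where $|I\!I(V,V)|^2=|K|$) gives $1$.
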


This is the most general form of the relationship between $S_1$ and $\S$.
Compare this formula to the one in Lemma 1 in \cite{cdo:harmonic-lift}.
We will also need \eqref{eq:S1-curvature-general} in the special case when
${\gamma}(t)$ is a diameter of $\mathbb{D}$, say from $-1$ to $1$.
In this case $\kappa = 0$ and we can write the equation as
\begin{equation} \label{eq:S1-curvature-special}
S_1 \phi(x) = \text{Re} \{\S \phi(x)\} + \frac{3}{4}|\phi'(x)|^2|K(\phi(x))|\,.
\end{equation}
Here, by $S_1 \phi(x)$ we mean $S_1$ applied to $\phi$ restricted to the
interval $(-1,1)$.

\begin{proof}[Proof of Lemma \ref{lemma:ahlfors-and-hol-schwarzian}]
Let $v(t) = |\vphi'(t)|$. The proof is based on a formula of Chuaqui and
Gevirtz in \cite{chuaqui-gevirtz:S1}, according to which
\begin{equation}
S_1\vphi = \left(\frac{v'}{v}\right)'-\frac{1}{2}\left(\frac{v'}{v}\right)^2
 +\frac{1}{2}v^2 k^2\,, \label{eq:S1-Ch-G}
\end{equation}
where $k$ is the Euclidean curvature of the curve $\vphi(t)$ in $\mathbb{R}^{2n}$.
We compute the terms on the right-hand side.

First, by definition,
\[
v(t) = |\vphi'(t)|= |\phi'({\gamma}(t))| = e^{\sigma({\gamma}(t))}\,,
\]
and hence
\[
\frac{v'}{v} = \langle \nabla\sigma\,,\,{\gamma}'\rangle\,,\quad
\left(\frac{v'}{v}\right)' = \Hess \sigma({\gamma}',{\gamma}') +
\langle\nabla\sigma\,,\,{\gamma}''\rangle\,,
\]
where
\[
\Hess\sigma =
\begin{pmatrix}
\sigma_{xx} & \sigma_{xy}\\
\sigma_{xy} & \sigma_{yy}
\end{pmatrix}
\]
is the Hessian matrix regarded as a bilinear form and ${\gamma}'$ is
identified with unit tangent vector $(x'(t),y'(t))$. Moreover,
with a similar identification, ${\gamma}'' = \kappa \mathbf{n}$,
where $\mathbf{n}$ is the unit normal to ${\gamma}(t)$. Thus
\begin{equation}
\left(\frac{v'}{v}\right)'-\frac{1}{2}\left(\frac{v'}{v}\right)^2
= \Hess \sigma({\gamma}',{\gamma}') + \kappa\langle\nabla\sigma\,,\,
\mathbf{n}\rangle -\frac{1}{2}\langle\nabla\sigma\,,\,{\gamma}'\rangle^2\,.
\label{eq:Ch-G-first-term}
\end{equation}

Next we work with the curvature term $k^2v$. We can write
\begin{equation}
k^2 = k_i^2 + |I\!I(V,V)|^2\,, \label{eq:k-and-ki}
\end{equation}
where $k_i$ is the intrinsic (geodesic) curvature of
$\vphi({\gamma}(t))$ on the surface $\vphi(\mathbb{D}) = \Sigma
\subset \mathbb{R}^{2n}$. Furthermore,
$\vphi\colon(\mathbb{D},e^{2\sigma}\mathbf{g}_0) \rightarrow
(\Sigma,\mathbf{g}_0)$ is a local isometry between $\Sigma$ with
the Euclidean metric, denoted here by $\mathbf{g}_0$, and
$\mathbb{D}$ with the conformal metric $e^{2\sigma}\mathbf{g}_0$,
and so $k_i = \hat{\kappa}$, the curvature of ${\gamma}(t)$ in the
metric $e^{2\sigma}\mathbf{g}_0$. In turn, a classical formula in
conformal geometry states that
\[
e^\sigma \hat{\kappa} = \kappa - \langle \nabla \sigma\,,\,
\mathbf{n}\rangle;
\]
see, for example, Section 3 in \cite{os:sch}. Combining these
formulas gives us
\begin{equation}
v^2k^2 = k^2e^{2\sigma} = \kappa^2 - 2\kappa\langle\nabla \sigma\,,\,
\mathbf{n}\rangle + \langle\nabla\sigma\,,\,\mathbf{n}\rangle^2 +
e^{2\sigma}|I\!I(V,V)|^2\,. \label{eq:Ch-G-second-term}
\end{equation}

For $S_1\vphi$ in \eqref{eq:S1-Ch-G} we combine \eqref{eq:Ch-G-first-term}
and \eqref{eq:Ch-G-second-term} and manipulate some terms to write
\[
\begin{aligned}
S_1\vphi &= \Hess(\sigma)({\gamma}',{\gamma}') -\frac{1}{2}\langle
\nabla\sigma\,,\,{\gamma}'\rangle^2+ \frac{1}{2}\langle\nabla\sigma\,,\,
\mathbf{n}\rangle^2 + \frac{1}{2}\kappa^2 + \frac{1}{2}e^{2\sigma}|I\!I(V,V)|^2\\
&= \Hess(\vphi)({\gamma}',{\gamma}')+ \frac{1}{2}|\nabla\sigma|^2 -
\langle\nabla\sigma\,,\,{\gamma}'\rangle^2 + \frac{1}{2}\kappa^2 +
\frac{1}{2}e^{2\sigma}|I\!I(V,V)|^2\\
&= \Hess(\sigma)({\gamma}',{\gamma}') -
\langle\nabla\sigma\,,\,{\gamma}'\rangle^2 -
\frac{1}{2}(\Delta\sigma - |\nabla\sigma|^2)
+ \frac{1}{2}\Delta \sigma + \frac{1}{2}\kappa^2
+ \frac{1}{2}e^{2\sigma}|I\!I(V,V)|^2
\end{aligned}
\]
Next, one finds by straight calculation (see also Section \ref{section:conf-sch}) that
\[
\Hess(\sigma)({\gamma}',{\gamma}') -\langle\nabla\sigma\,,\,
{\gamma}'\rangle^2 -\frac{1}{2}(\Delta\sigma - |\nabla\sigma|^2) =
\text{Re}\{\S \phi({\gamma})({\gamma}')^2\}
\]
while from \eqref{eq:gauss-curvature},
\[
\frac{1}{2}\Delta \sigma = -\frac{1}{2}e^{2\sigma}K(\phi) =
\frac{1}{2}e^{2\sigma}|K(\phi)|\,.
\]
Substituting these and $e^{2\sigma} = |\phi'|^2$ gives
\[
S_1\vphi(t)  = \text{Re}\{\S \phi({\gamma}(t))({\gamma}'(t))^2\} +
\frac{1}{2}|\phi'({\gamma}(t))|^2(K(\vphi(t)) +
|I\!I(V(t),V(t))|^2) + \frac{1}{2}\kappa^2(t)\,.
\]
Now appealing to Lemma \ref{lemma:II-gaussian} brings this into final form,
\[
S_1\vphi(t)  = \text{Re}\{\S \phi({\gamma}(t))({\gamma}'(t))^2\} +
\frac{3}{4}|\phi'({\gamma}(t))|^2 K(\vphi(t)) + \frac{1}{2}\kappa^2(t)\,.
\]
\end{proof}


\section{Injectivity, Extremal Functions,  and the Proofs of Theorems
\ref{theorem:hol-p-criterion} and
\ref{theorem:extremal}}\label{section:injectivity}

We recall the hypothesis of Theorem \ref{theorem:hol-p-criterion},
that $\phi$ satisfies \eqref{eq:hol-p-criterion},
\begin{equation*}
|\S \phi(z)| + \frac{3}{4}|\phi'(z)|^2|K(\phi(z))| \le 2p(|z|)\,,
\quad z \in \mathbb{D}\,.
\label{eq:hol-p-criterion-2}
\end{equation*}
The proof of injectivity rests on a result of Chuaqui and Gevirtz
\cite{chuaqui-gevirtz:S1} giving a criterion for univalence on (real)
curves in terms of $S_1$.

\begin{thma}
Let $p(x)$ be a continuous
function such that the differential equation $u''(x)+p(x)u(x)=0$
admits no nontrivial solution $u(x)$ with more than one zero in
$(-1,1)$. Let $\varphi : (-1,1)\rightarrow {\Bbb R}^m$ be a curve
of class $C^3$ with tangent vector $\varphi'(x)\neq 0$.  If
$S_1\varphi(x)\leq2p(x)$, then $\varphi$ is univalent.
\end{thma}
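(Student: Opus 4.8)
The plan is to reduce injectivity on the curve to a statement about zeros of a second-order linear ODE, which is the classical mechanism behind Nehari-type theorems. First I would observe that univalence of $\vphi$ on $(-1,1)$ means that no two distinct parameter values $x_1 < x_2$ can have $\vphi(x_1) = \vphi(x_2)$. The strategy is to argue by contradiction: suppose $\vphi(a) = \vphi(b)$ for some $a < b$ in $(-1,1)$, and extract from this coincidence a nontrivial solution of $u'' + pu = 0$ possessing two zeros in $(-1,1)$, contradicting the hypothesis on $p$.

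The key step connects the vanishing of a difference of positions to an oscillation property, and this is where the specific structure of $S_1$ enters. The idea is that $S_1\vphi$ plays for the real curve $\vphi$ the same role that the classical Schwarzian plays for analytic functions in Nehari's original argument. In the scalar case one writes $f = w_1/w_2$ where $w_1, w_2$ solve $w'' + \tfrac{1}{2}(\S f)w = 0$, so that zeros of $f(x_1) - f(x_2)$ correspond to a solution vanishing at two points. For the vector-valued curve I would look for an analogous device: since $S_1\vphi$ is a scalar real quantity, one can associate to it a comparison equation $u'' + \tfrac{1}{2}(S_1\vphi)u = 0$, or work directly with the speed $v = |\vphi'|$ through the Chuaqui--Gevirtz formula \eqref{eq:S1-Ch-G}. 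The bound $S_1\vphi \le 2p$ combined with the Sturm comparison theorem then forces any solution of the majorant equation $u'' + pu = 0$ to oscillate at least as fast, so a double coincidence of $\vphi$ would produce a solution of $u'' + pu = 0$ with two zeros.

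More concretely, I would try to show that if $\vphi$ is \emph{not} injective, then the equation $u'' + \tfrac{1}{2}(S_1\vphi)\,u = 0$ admits a solution with two zeros in $(-1,1)$; then Sturm comparison against $u'' + pu = 0$, using $\tfrac{1}{2}S_1\vphi \le p$, transfers the oscillation to the majorant and contradicts the hypothesis. The link between non-injectivity and two zeros is the heart of the matter: one expects that a coincidence $\vphi(a) = \vphi(b)$, together with the fact that $S_1$ governs the ``projective'' or M\"obius-invariant part of the curve's geometry (recall $S_1$ is invariant under M\"obius transformations of $\mathbb{R}^m$ by \eqref{eq:S1-invariance}), lets one post-compose with a suitable M\"obius map sending the coincident image point to $\infty$, thereby creating the requisite pair of zeros in an associated scalar quantity.

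The main obstacle I anticipate is precisely this reduction from a vector coincidence to a scalar two-zero statement. In the analytic one-dimensional case the factorization $f = w_1/w_2$ is canonical and the Schwarzian of $f$ is literally twice the coefficient of the defining ODE; for a curve in $\mathbb{R}^m$ there is no single quotient representation, so one must instead exploit the M\"obius invariance \eqref{eq:S1-invariance} to normalize and then produce a scalar test function whose zeros encode the self-intersection. Making that test function genuinely have two zeros — rather than merely one — from a single coincidence $\vphi(a)=\vphi(b)$ is the delicate point, and I expect it to require a careful argument (perhaps a continuity or intermediate-value argument applied after the M\"obius normalization) rather than a direct computation. Since this is exactly the content attributed to Chuaqui and Gevirtz \cite{chuaqui-gevirtz:S1}, I would lean on their framework for that step and present the Sturm comparison as the clean concluding move.
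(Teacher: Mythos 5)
First, a point of comparison you could not have known: the paper offers no proof of Theorem A at all --- it is imported verbatim from Chuaqui and Gevirtz \cite{chuaqui-gevirtz:S1} and used as a black box. So your proposal must stand on its own, and as written it does not: by your own account, the pivotal step --- converting a coincidence $\varphi(a)=\varphi(b)$ into a nontrivial solution of a scalar second-order equation with two zeros --- is precisely what you defer back to \cite{chuaqui-gevirtz:S1}. That step \emph{is} the theorem; the contradiction setup and the Sturm comparison surrounding it are routine. There is also a concrete misstep in the route you sketch first: no natural scalar function attached to $\varphi$ satisfies $u''+\tfrac12(S_1\varphi)u=0$, so one cannot simply ``associate a comparison equation'' to $S_1\varphi$; the coefficient that actually arises is smaller than $\tfrac12 S_1\varphi$, and identifying it is exactly where the Chuaqui--Gevirtz formula \eqref{eq:S1-Ch-G} is needed.

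Here is how the gap is closed; your M\"obius-inversion instinct is the correct starting point. Suppose $\varphi(a)=\varphi(b)=P$ with $a<b$ and (shrinking the interval if necessary) $\varphi\ne P$ on $(a,b)$. Let $T$ be an inversion of $\mathbb{R}^m\cup\{\infty\}$ centered at $P$ and put $\psi=T\circ\varphi$ on $(a,b)$; by \eqref{eq:S1-invariance}, $S_1\psi=S_1\varphi\le 2p$ there. With $v=|\psi'|$, formula \eqref{eq:S1-Ch-G} gives
\[
\S h \;=\; \left(\frac{v'}{v}\right)'-\frac12\left(\frac{v'}{v}\right)^2 \;=\; S_1\psi-\frac12\,v^2k^2 \;\le\; S_1\psi \;\le\; 2p\,,
\]
where $h(x)=\int v$ and the nonnegative curvature term $\tfrac12 v^2k^2$ has been discarded --- this replaces your proposed coefficient $\tfrac12 S_1\varphi$. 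The function $w=v^{-1/2}$ then satisfies $w''+\tfrac12(\S h)w=0$ on $(a,b)$; this is the same device the paper uses in the proof of Theorem \ref{theorem:covering}. Since $T$ inverts at $P$, one has $v(x)\sim C|x-a|^{-2}$ as $x\to a^+$ and similarly at $b^-$, so $w>0$ on $(a,b)$ with $w\to 0$ at both endpoints: this is the two-zero scalar object your sketch lacked, produced directly from the single coincidence with no intermediate-value argument required. A positive solution of $w''+qw=0$ with $q\le p$ vanishing in the limit at two points of $(-1,1)$ contradicts, via Sturm comparison, the hypothesis that $u''+pu=0$ admits no nontrivial solution with more than one zero. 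The genuinely delicate point --- the reason this is a theorem of \cite{chuaqui-gevirtz:S1} rather than an exercise --- is that the equation for $w$ holds only on the open interval, with $\S h$ possibly unbounded near the endpoints, so the endpoint zeros are limits and the comparison must be run in that limiting form.
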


We pass immediately to
\begin{proof}[Proof of Theorem \ref{theorem:hol-p-criterion}]
If $\phi$ satisfies \eqref{eq:hol-p-criterion} then, from
\eqref{eq:S1-curvature-special}, along the diameter $(-1,1)$,
\[
S_1\phi(x) \le 2p(x)\,,
\]
and so $\phi$ is injective there by Theorem A. The same holds for
any rotation $\phi(e^{i\theta} z)$ of $\phi$ and hence $\phi$ is
injective along any diameter of $\mathbb{D}$.

Suppose now that $z_1$ and $z_2$ are distinct points not on a diameter.
Let $\gamma$ be the hyperbolic geodesic through $z_1$ and $z_2$.
By a rotation of $\mathbb{D}$ we may assume that $\gamma$ meets the
imaginary axis orthogonally at a point $i\rho$. The M\"obius transformation
\[
T(z) = \frac{z-i\rho}{1+i\rho z}
\]
maps $\mathbb{D}$ onto itself, preserves the imaginary axis,
and carries $\gamma$ to the diameter $(-1,1)$. The function
\[
\psi(z) = \phi(T(z))
\]
is a holomorphic reparametrization of $\Sigma = \phi(\mathbb{D})$
with $\psi'\ne 0$ and we claim that
\[
|\S \psi(x)| + \frac{3}{4}|\psi'(x)|^2|K(\psi(x))| \le 2p(x)\,,\quad -1 < x <1\,.
\]
If so, then $S_1\psi(x)  \le 2p(x)$ as above, whence $\psi$ is
injective along $(-1,1)$ and $\phi(z_1) \ne \phi(z_2)$.

For this, first note that
\[
|\psi'(x)| =|\phi'(T(x))|\,|T'(x)|
\]
while also
\[
\S \psi(x) = \S\phi(T(x))T'(x)^2\,,
\]
from \eqref{eq:S-circ-T}. Next, by hypothesis,
\[
|\S\phi(T(x))|\,|T'(x)|^2 + \frac{3}{4}|\phi'(T(x))|^2|T'(x)|^2
|K(\phi(T(x)))|\le 2p(|T(x)|)|T'(x)|^2\,,
\]
and so the claim will be established if we show
\begin{equation}
p(|T(x)|T'(x)|^2) \le p(|x|)\,,\quad -1 <x <1\,. \label{eq:injectivity-punchline}
\end{equation}
But now a simple calculation gives that 
\begin{equation}
|x| \le |T(x)| \label{eq:nehari-trick}
\end{equation}
 for this
particular M\"obius transformation, whence
\[
(1-|T(x)|^2)^2p(|T(x))| \le (1-x^2)^2p(|x|)
\]
by the assumption that $(1-x^2)^2p(x)$ is non-increasing for $x\in[0,1)$.
Furthermore
\[
\frac{|T'(x)|}{1-|T(x)|^2} = \frac{1}{1-x^2}
\]
for any M\"obius transformation of $\mathbb{D}$ onto itself. Thus
\[
|T'(x)|^2p(|T(x)|) = \frac{(1-|T(x)|^2)^2}{(1-x^2)^2}p(|T(x)|) \le p(|x|)\,,
\]
finishing the proof.
\end{proof}

The trick of passing from injectivity along diameters to the
general case via this special M\"obius transformation,
using \eqref{eq:nehari-trick} to obtain \eqref{eq:injectivity-punchline}, goes
back to Nehari. We used the same argument in
\cite{cdo:harmonic-lift}.

\medskip

Theorem \ref{theorem:extremal} states that if $\phi$
satisfies the univalence criterion \eqref{eq:hol-p-criterion}
and fails to be injective on $\partial \mathbb{D}$ (assuming continuous extension)
then it must do so in a particular way, that the surface
$\Sigma = \phi(\mathbb{D})$ contains a Euclidean circle
minus the cut point where injectivity fails. Moreover,
equality holds in \eqref{eq:hol-p-criterion} along the
preimage of the circle. These properties of such extremal
functions depend upon another result of Chuaqui and Gevirtz
in the same paper \cite{chuaqui-gevirtz:S1}. To state it we need
one additional construction, which will be used again in later sections.

If the function $p(x)$ of Theorem A is even, as will be the case
for a Nehari function, then the solution $u_0$
of the differential equation $u''+pu=0$ with initial conditions $u_0(0)=1$
and $u_0'(0)=0$ is also even, and therefore $u_0(x)\neq0$ on $(-1,1)$,
since otherwise it would have at least two zeros.  Thus the function
\begin{equation}
\Phi(x) = \int_0^x u_0(t)^{-2}\,dt\,, \qquad -1 < x < 1\,,
\label{eq:Phi}
\end{equation}
is well defined and has the properties $\Phi(0)=0$, $\Phi'(0)=1$,
$\Phi''(0)=0$, $\Phi(-x)=-\Phi(x)$, and  $\mathcal{S}\Phi=2p$.
Furthermore, $S_1\Phi=\mathcal{S}\Phi$ since
$\Phi$ is real-valued, and so $S_1\Phi=2p$ as well.

In terms of $\Phi$, the second Chuaqui-Gevirtz theorem is as follows.

\begin{thmb} Let $p(x)$ be an even function with the properties
assumed in Theorem A, and let $\Phi$ be defined as above.  Let
$\varphi: (-1,1)\rightarrow {\Bbb R}^m$ satisfy $S_1\varphi(x)\leq2p(x)$
and have the normalization $\varphi(0)=0$, $|\varphi'(0)|=1$, and
$\varphi''(0)=0$.  Then $|\varphi'(x)|\leq\Phi'(|x|)$ for $x\in(-1,1)$,
and $\varphi$ has an extension to the closed interval $[-1,1]$ that is
continuous with respect to the spherical metric.  Furthermore, there are
two possibilities:
\par $(i)$  If $\Phi(1)<\infty$, then $\varphi$ is univalent in $[-1,1]$
and $\varphi([-1,1])$ has finite length.
\par $(ii)$ If $\Phi(1)=\infty$, then either $\varphi$ is univalent in
$[-1,1]$ or $\varphi=R\circ\Phi$ for some rotation $R$ of ${\Bbb R}^m$.
\end{thmb}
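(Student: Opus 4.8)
The plan is to reduce the whole statement to a scalar Sturm comparison for the single positive function $u=|\varphi'|^{-1/2}$, and then to extract the geometric conclusions from the equality case of that comparison. First I would substitute $v=|\varphi'|$ into the Chuaqui--Gevirtz identity \eqref{eq:S1-Ch-G}. With $u=v^{-1/2}$ one computes $u''/u=-\tfrac12\big[(v'/v)'-\tfrac12(v'/v)^2\big]$, so the identity reads $S_1\varphi=-2\,u''/u+\tfrac12 v^2k^2$. Hence the hypothesis $S_1\varphi\le 2p$ is exactly the differential inequality
\[
u''+pu\ \ge\ \tfrac14\,v^2k^2\,u\ \ge\ 0,
\]
i.e.\ $u$ is a positive supersolution of $u''+pu=0$. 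The normalization $\varphi(0)=0$, $|\varphi'(0)|=1$, $\varphi''(0)=0$ gives $v(0)=1$ and $v'(0)=\langle\varphi'(0),\varphi''(0)\rangle=0$, so $u(0)=1=u_0(0)$ and $u'(0)=0=u_0'(0)$, matching the initial data of the extremal solution $u_0$.

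Next I would compare $u$ with $u_0$ through the Wronskian $W=u'u_0-uu_0'$. Since $u_0''=-pu_0$ and $u_0>0$ on $(-1,1)$ (as noted before \eqref{eq:Phi}), we get $W'=u_0(u''+pu)\ge 0$ while $W(0)=0$; thus $(u/u_0)'=W/u_0^2\ge 0$ on $[0,1)$, and the evenness of $p$ gives the mirror statement on $(-1,0]$. Therefore $u/u_0\ge 1$ throughout, that is $u\ge u_0$, which is precisely $|\varphi'(x)|\le u_0(|x|)^{-2}=\Phi'(|x|)$, the first assertion. Integrating, the arc length satisfies $\int_0^x|\varphi'(t)|\,dt\le\Phi(|x|)$. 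When $\Phi(1)<\infty$ this bounds the total length by $\Phi(1)$, so $\varphi$ extends continuously in the Euclidean (hence spherical) metric to $[-1,1]$ and has rectifiable image.

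The rigidity needed for alternative (ii) comes from the equality case of this comparison. If $u(x_1)=u_0(x_1)$ for some $x_1>0$, then $u/u_0$ is nondecreasing, $\ge 1$, and equals $1$ at both $0$ and $x_1$, so $u/u_0\equiv 1$ on $[0,x_1]$; there $W\equiv 0$, whence $W'\equiv 0$ forces simultaneously $u''+pu=0$ and $v^2k^2\equiv 0$. Thus the Euclidean curvature $k$ vanishes and $v=\Phi'$, so on that interval $\varphi$ is a straight segment traversed with speed $\Phi'$, i.e.\ a rotation of the model curve $\Phi$; propagating, any such equality forces $\varphi=R\circ\Phi$ globally.

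The main obstacle is the closed-interval univalence and the dichotomy between (i) and (ii): univalence on the open interval is Theorem A, so the issue is endpoint collisions. I would argue the contrapositive, that a coincidence $\varphi(a)=\varphi(b)$ involving an endpoint forces the comparison to be tight on $[a,b]$ — via a limiting form of Theorem A together with the equality analysis above — and hence $\varphi=R\circ\Phi$. A straight line $R\circ\Phi$ has distinct finite endpoints exactly when $\Phi(1)<\infty$, so no collision is possible in that case and $\varphi$ is univalent on $[-1,1]$; when $\Phi(1)=\infty$ it runs to $\infty$ at both ends, which is the sole non-univalent possibility and accounts for the alternative $\varphi=R\circ\Phi$ with common spherical limit $\infty$. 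Turning this heuristic into genuine tightness, and separately proving spherical convergence of $\varphi(x)$ as $x\to\pm1$ in the infinite-length case (which requires controlling the total turning $\int kv\,dx$ of the tangent through the integrated inequality $\int u_0\,u\,v^2k^2\le 4W$), is the delicate part of the argument and the step I expect to be hardest.
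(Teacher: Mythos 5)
The first thing to note is that this paper contains no proof of Theorem B: it is quoted verbatim as a result of Chuaqui and Gevirtz \cite{chuaqui-gevirtz:S1} and used as a black box in the proof of Theorem \ref{theorem:extremal}. So your attempt can only be measured against what a complete proof requires. Your opening reduction is correct and cleanly executed: with $v=|\varphi'|$ and $u=v^{-1/2}$, the identity \eqref{eq:S1-Ch-G} gives $S_1\varphi=-2u''/u+\tfrac12 v^2k^2$, so the hypothesis is equivalent to $u''+pu\ge\tfrac14 v^2k^2u\ge0$, and the Wronskian comparison with $u_0$ yields $u\ge u_0$, i.e.\ $|\varphi'(x)|\le\Phi'(|x|)$. (This is in fact the same substitution the paper uses in the proof of Theorem \ref{theorem:covering}.) The continuity and finite-length claims of case $(i)$ then follow by integration.

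The rest, however, has genuine gaps, one of them an incorrect claim. First, your propagation step fails: equality $u(x_1)=u_0(x_1)$ pins the nondecreasing function $u/u_0$ at the value $1$ only on $[0,x_1]$, so it forces $k\equiv0$ and $v=\Phi'$ on $[0,x_1]$ \emph{only}; a curve can coincide with $R\circ\Phi$ up to $x_1$ and then bend away with strict inequality $S_1<2p$, so ``any such equality forces $\varphi=R\circ\Phi$ globally'' is false. Second, and decisively, the closed-interval univalence and the dichotomy in $(ii)$ rest entirely on the implication ``endpoint collision $\Rightarrow$ the comparison is tight,'' which you gesture at via a ``limiting form of Theorem A'' but never prove; Theorem A gives injectivity only on the open interval, and this implication---not the derivative bound---is the actual content of the Chuaqui--Gevirtz theorem. (Their argument goes through M\"obius shifts $T\circ\varphi$, which preserve $S_1$ by \eqref{eq:S1-invariance}, and an equality analysis of the resulting comparison, rather than through pointwise equality of $|\varphi'|$ with $\Phi'$.) Third, the continuous spherical extension when $\Phi(1)=\infty$ is unproven: an injective curve of infinite length obeying your derivative bound could a priori accumulate on a continuum instead of converging, and controlling the total turning, as you yourself note, is the hard analytic step. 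So the proposal establishes the first assertion and part of $(i)$, but leaves the core of the theorem as heuristics.
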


We can now proceed with

\begin{proof}[Proof of Theorem \ref{theorem:extremal}]

Suppose that  $\phi$ satisfies \eqref{eq:hol-p-criterion} and fails
to be injective on
$\partial\mathbb{D}$, and let $\zeta_1,\zeta_2\in\partial\mathbb{D}$
be points for
which $\phi(\zeta_1)=\phi(\zeta_2)=P$. We first show that we can
form $\psi = \phi \circ T$ for a suitable  M\"obius transformation
of $\mathbb{D}$ onto itself, with $\psi$ still satisfying
\eqref{eq:hol-p-criterion} and with $\psi(1)= \psi(-1) = P$.
We refer to the calculations in the proof of Theorem
\ref{theorem:hol-p-criterion}, and we distinguish two cases.

Suppose first that $(1-x^2)^2p(x)$ is constant. Then
the condition \eqref{eq:hol-p-criterion} is fully invariant
under the M\"obius transformations of $\mathbb{D}$, and for a
M\"obius modification $\psi = \phi \circ T$ with
$T(1)=\zeta_1$, $T(-1)=\zeta_2$ we obtain  $\psi(1) = \psi(-1)=P$.

Now suppose that $(1-x^2)^2p(x)$ is not constant. Suppose also,
by way of contradiction, that $\zeta_1$ and $\zeta_2$ are not
on a diameter. Since \eqref{eq:hol-p-criterion} is, in any
case, invariant under rotations of $\mathbb{D}$, we may assume
that $\zeta_1$ and
$\zeta_2$ are both in the upper half plane and are symmetric in
the imaginary axis. Just as in the proof of Theorem \ref{theorem:hol-p-criterion},
let $T(z) = ({z-i\rho})/({1+i\rho z})$ with $T(-1)=\zeta_1$, $T(1)=\zeta_2$,
and let
$\psi=\phi\circ T$. Then \eqref{eq:hol-p-criterion} holds for $\psi$,
together with $\psi(1)=\psi(-1)=P$ and $S_1\psi(x)\leq 2p(x)$.
Moreover we must have a strict inequality $S_1\psi(x)<2p(x)$ on
some interval in $(-1,1)$ because $(1-x^2)^2p(x)$ is not constant.
However, this last statement stands in
contradiction with Theorem B. To be precise, there is a  M\"obius
transformation $M$ of $\Bbb{R}^{2n}$ such that $M\circ \psi$ satisfies
the hypotheses of Theorem B with $(M\circ \psi)(-1) = (M\circ \psi)(1)$,
and because $\psi$ is not injective on $[-1,1]$ we have, using \eqref{eq:S-circ-T}, 
$S_1(M\circ \psi) = S_1 \psi = 2p$. The contradiction shows that
$\zeta_1$ and $\zeta_2$ must lie on a diameter.

In all cases, by a suitable modification we can now assume
that the injectivity of $\phi$ on $\partial\mathbb{D}$ fails by
$\phi$ mapping the interval $[-1,1]$ to a closed curve on
$\phi(\overline{\mathbb{D}})$ with $\phi(-1)=\phi(1)=P$, and by a
post composition with a M\"obius transformation of $\mathbb{R}^{2n}$
we may further assume that $\phi$ is normalized as in Theorem B.
Then again by Theorem B, $\phi = V \circ \Phi$ for some M\"obius
transformation of $\mathbb{R}^{2n}$. Hence $S_1 \phi = S_1\Phi = 2p$
and $\phi$ maps $[-1,1]$ to a Euclidean circle or a line, since $\Phi$
maps $[-1,1]]$ onto $\mathbb{R}\cup\{\infty\}$ and $V$ preserves circles.
Finally, since $S_1\phi(x) = 2p(x)$ and $\phi$ satisfies
\eqref{eq:hol-p-criterion}, it follows from
\eqref{eq:S1-curvature-special} that
\[
|\S \phi(x)| + \frac{3}{4}|\phi'(x)|^2|K(\phi(x))| = 2p(x),\quad x \in [-1,1]\,.
\]
This concludes the proof.
\end{proof}


\section{A Covering Theorem} \label{section:covering}

We continue to assume that $\phi$ satisfies the injectivity criterion
\eqref{eq:hol-p-criterion}. In this section we will derive a lower
bound for the radius of a metric disk centered at $\phi(0)$ on $\Sigma$.
We will assume an additional normalization of $|\phi'(0)|=1$, and then
the lower bound will depend on $|\phi''(0)|$ and on a second extremal
function associated with the Nehari function $p$. We also need to assume
that $p$ is nondecreasing on $[0,1)$, which is the case for many examples.
The result we obtain is very much in line with those from classical
geometric function theory, see, for example, \cite{essen-keogh:schwarzian}.

Let $U$ be the solution of
\[
U''-pU=0\,,\qquad U(0)=1\,,\,\, U'(0)=0\,
\]
(note the minus sign in the differential equation) and define
\[
\Psi(x) = \int_0^x U(t)^{-2}\,dt\,.
\]

\begin{theorem} \label{theorem:covering} Let $\phi$ be a holomorphic curve
satisfying \eqref{eq:hol-p-criterion} with $|\phi'(0)|=1$, and
suppose that $p(x)$ is nondecreasing on $[0,1)$. Then
\begin{equation}
\min_{|z|=r}d_{\Sigma}(\phi(z), \phi(0)) \geq\,
\frac{2\Psi(r)}{2+|\phi''(0)|\Psi(r)} \,, \label{eq:min-d}
\end{equation}
where $d_{\Sigma}$ denotes distance on $\Sigma= \phi(\mathbb{D})$.  In particular,
$\Sigma$ contains a metric disk of radius
\[
\frac{2\Psi(1)}{2+|\phi''(0)|\Psi(1)}  
\]
centered at $\phi(0)$.
\end{theorem}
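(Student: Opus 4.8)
The plan is to reduce the intrinsic estimate to a one–dimensional comparison that is exactly dual to the one driving the injectivity proof. Since $\phi$ is a conformal local isometry from $(\mathbb{D},e^{2\sigma}|dz|^2)$ onto $\Sigma$, the distance $d_\Sigma(\phi(0),\phi(z))$ is the length of the shortest path in $\mathbb{D}$ measured in the metric $e^{\sigma}|dz|$, and this length is at least $\int_0^r m(\rho)\,d\rho$, where $m(\rho)=\min_{|w|=\rho}|\phi'(w)|$, because the radial coordinate of any path from $0$ to $z$ sweeps all of $[0,r]$. The hypotheses \eqref{eq:hol-p-criterion} and the normalizations are invariant under rotations of $\mathbb{D}$ and preserve $|\phi''(0)|$, so everything reduces to a lower bound for $|\phi'|$ along curves emanating from the origin.

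The key analytic input is the lower companion of the bound $S_1\phi(x)\le 2p$ used for injectivity. From \eqref{eq:S1-curvature-special} and the criterion one gets $\mathrm{Re}\{\S\phi\}\ge -|\S\phi|\ge -(2p-\tfrac34|\phi'|^2|K|)$, and hence $S_1\phi\ge -2p$ along the diameter. The natural extremal for this lower bound is $\Psi$ rather than $\Phi$: since $U''-pU=0$ and $\Psi'=U^{-2}$ one computes $\S\Psi=S_1\Psi=-2p$, the exact mirror of $\S\Phi=2p$. Writing $g=|\phi'|^{-1/2}=e^{-\sigma/2}$, the inequality $\sigma''-\tfrac12(\sigma')^2\ge -2p$ is equivalent to $g$ being a subsolution, $g''\le pg$. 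A Sturm comparison (set $g=\tilde U\eta$ and note $(\tilde U^2\eta')'\le 0$) then forces $g\le\tilde U$ once the initial data match, where $\tilde U=U\,(1+\tfrac12|\phi''(0)|\Psi)$ solves $\tilde U''=p\tilde U$ with $\tilde U(0)=1$ and $\tilde U'(0)=\tfrac12|\phi''(0)|$. The slope is supplied by Cauchy--Schwarz: $|\sigma'(0)|=|2\,\mathrm{Re}\,\sigma_z(0)|\le|\langle\phi''(0),\phi'(0)\rangle|\le|\phi''(0)|$, so $g'(0)=-\tfrac12\sigma'(0)\le\tilde U'(0)$. Thus $|\phi'|\ge\tilde U^{-2}$, and the integral collapses under the substitution $u=\Psi$: $\int_0^r\tilde U^{-2}\,d\rho=\int_0^{\Psi(r)}(1+\tfrac12|\phi''(0)|u)^{-2}\,du=\frac{2\Psi(r)}{2+|\phi''(0)|\Psi(r)}$, which is precisely the bound \eqref{eq:min-d}; letting $r\to 1$ gives the metric disk.

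The hard part is that the subsolution inequality $\sigma''-\tfrac12(\sigma')^2\ge -2p$ is \emph{not} available along a bare radius. Indeed, by the Chuaqui--Gevirtz identity \eqref{eq:S1-Ch-G}, $\sigma''-\tfrac12(\sigma')^2=S_1\phi-\tfrac12|\phi'|^2k^2$, and the squared curvature $k^2$ of the image of the radius enters with the wrong sign; its intrinsic (geodesic) part is not controlled by the criterion. This is exactly where Lemma \ref{lemma:II-gaussian} is indispensable: it pins the \emph{normal} curvature $|I\!I(V,V)|^2=\tfrac12|K|$ to the curvature term that the criterion does bound. The remedy, matching the intrinsic nature of $d_\Sigma$, is to run the comparison along a geodesic of $\Sigma$ rather than a radius: there the intrinsic curvature vanishes, so $k^2=|I\!I(V,V)|^2=\tfrac12|K|$, and combining \eqref{eq:S1-Ch-G} with the general identity of Lemma \ref{lemma:ahlfors-and-hol-schwarzian} and the criterion yields the clean inequality $g''\le p(|\gamma(t)|)\,g$ in the Euclidean arclength parameter $t$ of the preimage $\gamma$.

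The remaining technical point, and the reason $p$ is assumed nondecreasing, is the passage from the parameter $t$ to the disk radius. Since $\gamma(0)=0$ and $|\gamma'|\equiv 1$ we have $|\gamma(t)|\le t$, so $p(|\gamma(t)|)\le p(t)$ and therefore $g''\le p(t)g$; the comparison above then gives $g(t)\le\tilde U(t)$ and $|\phi'(\gamma(t))|\ge\tilde U(t)^{-2}$ along the geodesic. A minimizing geodesic from $\phi(0)$ to the nearest point of $\phi(\{|z|=r\})$ reaches radius $r$ at some arclength $t^{\ast}\ge r$, so $d_\Sigma=\int_0^{t^{\ast}}|\phi'(\gamma)|\,dt\ge\int_0^{t^{\ast}}\tilde U^{-2}\,dt\ge\int_0^{r}\tilde U^{-2}\,dt$, the integral evaluated above. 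Carefully tracking these reparametrizations — in particular verifying that the comparison remains valid up to the needed radius — is the main obstacle; the curvature identity of Lemma \ref{lemma:II-gaussian} and the monotonicity of $p$ are the two facts that make it go through.
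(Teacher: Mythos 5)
Your proposal is correct and follows essentially the same route as the paper's own proof: pass to a minimizing geodesic on $\Sigma$ whose preimage $\gamma$ is parametrized by Euclidean arclength, use the vanishing of geodesic curvature together with Lemma \ref{lemma:II-gaussian} (so $k^2=|I\!I(V,V)|^2=\tfrac12|K|$) and the criterion to get $g''\le p(t)g$ for $g=v^{-1/2}$ via $|\gamma(t)|\le t$ and the monotonicity of $p$, then apply Sturm comparison — your $\tilde U=U\bigl(1+\tfrac12|\phi''(0)|\Psi\bigr)$ is exactly the paper's comparison solution $y=(H')^{-1/2}$, and the final integral evaluation agrees. The only slip is in the last chain of inequalities: $\tilde U$ (like $p$) is undefined for $t\ge 1$, and $t^{\ast}$ may exceed $1$, so you should truncate the integral to $[0,r]$ \emph{before} invoking the comparison $|\phi'(\gamma(t))|\ge\tilde U(t)^{-2}$, exactly as the paper does, rather than comparing on all of $[0,t^{\ast}]$ first.
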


\begin{proof}  The proof relies ultimately on comparing solutions of
two differential equations, one involving the extremal $\Psi$, where
the relevant inequalities come to us by way of the formulas from Lemma
\ref{lemma:ahlfors-and-hol-schwarzian} and its proof. This requires some
preparation.

Let $z_r$ be a point on $|z|=r$ for which the minimum on the left
hand side of \eqref{eq:min-d} is attained. Since $\phi$ is
injective the minimum is positive and $z_r \ne 0$.  The geodesic,
$\Gamma$,  on $\Sigma$ that joins $\phi(0)$ with $\phi(z_r)$ is
contained in $\phi(\{|z|\leq r\})$ and we let
$\gamma=\phi^{-1}(\Gamma)$.  Write $\gamma(t)$ for the
parametrization of $\gamma$ by Euclidean arc-length, and let
$\varphi(t)=\phi(\gamma(t))$ be the corresponding parametrization
of $\Gamma$. Further, let
\[
v(t) = |\phi'(\gamma(t))|\,.
\]
Now  compare
the expressions for $S_1\varphi(t)$ in \eqref{eq:S1-curvature-general}
and \eqref{eq:S1-Ch-G}, using \eqref{eq:k-and-ki} relating the Euclidean
and intrinsic curvatures of $\Gamma$ and the second fundamental form of
$\Sigma$:
\[
\begin{split}
(v'/v)'-&\frac12(v'/)^2+\frac12\,v^2
\left(\, k_i^2+|I\!I(V,V)|^2\right) =\\
&{\rm Re}\{S\phi(\gamma)(\gamma')^2\}+\frac12\,v^2\left(\,
|K(\phi(\gamma))|+|I\!I(V,V)|^2\right)+\frac12\kappa^2 \, ,
\end{split}
\]
where $\kappa$ is the Euclidean curvature of $\gamma$ and $V$
is the Euclidean unit tangent vector field along $\Gamma$; all expressions are to be evaluated at $t$. Since $\Gamma$
is a geodesic $k_i=0$ and this becomes
\[
(v'/v)'-\frac{1}{2}(v'/v)^2 = {\rm
Re}\{\S\phi(\gamma)(\gamma')^2\}+
\frac{1}{2}v^2|K(\phi(\gamma))|+\frac{1}{2}\kappa^2\, .
\]
But the univalence criterion \eqref{eq:hol-p-criterion} implies
that
\[
{\rm Re}\{S\phi(\gamma)(\gamma')^2\} \geq -|S\phi(\gamma)| \geq
-2p(\gamma)+\frac34v^2|K(\phi(\gamma))| \, ,
\]
 hence
\begin{equation*}
\begin{aligned}
(v'/v)'-\frac12(v'/v)^2 &\geq
-2p(|\gamma|)+\frac54v(t)^2|K(\phi(\gamma))|+\frac12\kappa^2\\
&\geq
-2p(|\gamma|) \, . \label{eq:covering-comparison}
\end{aligned}
\end{equation*}
Now  let
\[
h(t)=\int_0^tv(\tau)\,d\tau \, .
\]
Then $\S h = (v'/v)'-\frac{1}{2}(v'/v)^2$, and from the preceding estimate,
\[
\S h(t) \geq -2p(|\gamma(t)|) \geq
-2p(t)\, , 
\]
the final inequality holding because $|\gamma(t)|\leq t$ and
we have assumed that $p(x)$ is nondecreasing on $[0,1)$.

The result $\S h(t) \ge -2p(t)$ is the main inequality we need
in order to apply the Sturm comparison theorem.
For this, first note that the
function $w=v^{-1/2}$ is the solution of
\[
w''+\displaystyle{\frac12}(Sh)w=0\,,\quad w(0)=1\,,\,w'(0) =
-\frac{1}{2} v'(0)\,.
\]
Next, consider also the solution $y(t)$ of
\[
y''-py=0\,,\quad y(0)=1\,,y'(0) = \frac{1}{2}|\phi''(0)| \, .
\]
Since
$
-\frac{1}{2}\S h(t) \le p(t)
$,
and also
\[
w'(0)=-\displaystyle{\frac12} v'(0) \leq
\displaystyle{\frac12}|\phi''(0)|\,,
\]
it follows by comparison that
\[
w(t) \le y(t)\,.
\]
Finally, observe that, explicitly,
$ y= (H')^{-1/2}$, where
\[
H=\frac{2\Psi}{2+|\phi''(0)|\Psi} \, .
\]
Consequently,
\[
h(t)=\int_0^tw^{-2}(\tau)\,d\tau \geq \int_0^ty^{-2}(\tau)\,d\tau =
\int_0^tH'(\tau)\,d\tau = H(t) \, ,
\]
and hence
\[
d_{\Sigma}(\phi(z_r),\phi(0))= \int_{\gamma}v \geq \int_0^r
v(t)\,dt = h(r) \geq H(r) \, .
\]
This completes the proof.

\end{proof}


\section{Conformal Schwarzians, Extremal Nehari Functions, and Convexity}
\label{section:conf-sch}

To advance further in the analysis of mappings satisfying
\eqref{eq:hol-p-criterion}, in particular to study continuous
extension to the boundary, we need estimates based on convexity.
This requires a notion of the Schwarzian for conformal metrics and an
associated differential equation. This section generally follows the
treatment of these ideas in \cite{cdo:harmonic-lift}, abbreviated somewhat
and modified to serve the case of holomorphic maps rather than lifts of
harmonic maps. We refer to that paper and also to \cite{os:sch} for
(many) more details.

Let $\mathbf{g}$ be a Riemannian metric on the disk $\Bbb D$.
We may assume that $\mathbf{g}$ is conformal to the Euclidean metric,
$\mathbf{g}_0=dx\otimes dx +dy \otimes dy= |dz|^2$. Let $\sigma$ be
a smooth function on $\Bbb D$ and form the symmetric 2-tensor
\begin{equation}
\Hess_{\mathbf{g}}(\sigma) - d\sigma \otimes d\sigma.
\label{eq:Hessian}
\end{equation}
Here $\Hess$ denotes the Hessian operator. If $\gamma(s)$ is an
arc-length parametrized geodesic for $\mathbf{g}$, then
\[
\Hess_{\mathbf{g}}(\sigma)(\gamma',\gamma') =
\frac{d^2}{ds^2}(\sigma\circ \gamma)\,.
\]
The Hessian depends on the metric, and since we will be
changing metrics we indicate this dependence by the subscript $\mathbf{g}$.

We now form
\[
B_\mathbf{g}(\sigma)={\Hess}_\mathbf{g}(\sigma)-d\sigma\otimes
d\sigma-\frac{1}{2}(\Delta_\mathbf{g} \sigma-||\grad_\mathbf{g}
\sigma||^2)\mathbf{g}\,.
\label{eq:B-sigma}
\]
The final term has been subtracted to make the trace zero.

This is the Schwarzian tensor of $\sigma$. Before explaining its
connection to conformal maps, metrics and the Schwarzian derivative,
first note that in standard Cartesian coordinates one can represent
$B_{\mathbf{g}}(\sigma)$ as a symmetric, traceless $2\times 2$ matrix,
say of the form
\[
\begin{pmatrix}
a & -b \\
-b & -a
\end{pmatrix}\,.
\]
Further identifying such a matrix with the complex number $a+bi$ then
allows us to associate the tensor $B_{\mathbf{g}}(\sigma)$ with $a+bi$,
and then
\[
||B_\mathbf{g_0}(\sigma)(z)||_\mathbf{g_0} = |a+bi|\,.
\]

A locally injective
holomorphic curve $\phi:\mathbb{D}\rightarrow\mathbb{C}^n$ is a
conformal mapping of $\mathbb{D}$ with the Euclidean metric into
$\mathbb{R}^{2n}$ with the Euclidean metric and if, as before,
we write  $\phi =(f_1,\dots,f_n)$ then the conformal factor is
\[
\phi^*(\mathbf{g_0}) = e^{2\sigma} \mathbf{g_0}\,,\quad \sigma =
\frac{1}{2}\log(|f_1'|^2+\cdots |f_n'|^2)\,.
\]
The Schwarzian derivative of $\phi$ is defined to be
\[
\mathcal{S}_\mathbf{g} \phi  = B_\mathbf{g}(\sigma)\,.
\]
When $n=1$ and $\sigma = \log|\phi'|$ we find that
\[
B_{\mathbf{g}_0}(\log|\phi'|)=\left( \begin{array}{rr} {\rm Re}\,
\mathcal{S}\phi & -{\rm Im}\,\mathcal{S}\phi \\ -{\rm Im}\,\mathcal{S}\phi
& -{\rm Re}\,\mathcal{S}\phi
\end{array}\right) \, ,
\]
writing the tensor in matrix form as above, where $\mathcal{S}\phi$ is
the classical Schwarzian derivative of $\phi$. When $n \ge 1$, identifying
the tensor with a complex number leads to
\[
\S\phi = 2(\sigma_{zz}-\sigma_z^2)\,,
\]
the definition that we gave in Section \ref{section:intro}.

The tensor $B_\mathbf{g} \sigma$ changes in a simple way if there is a
conformal change in the background metric $\mathbf{g}$. Specifically,
if $\widehat{\mathbf{g}} = e^{2\rho}\mathbf{g}$ then
\[
B_\mathbf{g}(\rho+\sigma) = B_\mathbf{g}(\rho)+B_{\widehat{\mathbf{g}}}(\sigma).
\]
This is actually a generalization of the chain rule \eqref{eq:S-chain-rule}
for the classical Schwarzian.  An equivalent formulation is
\begin{equation}
B_{\widehat{\mathbf{g}}}(\sigma -\rho) = B_\mathbf{g}(\sigma) -
B_\mathbf{g}(\rho)\,,
\label{eq:subtraction-formula}
\end{equation}
which is what we will need in later calculations.

Next, just as the linear differential equation $w''+(1/2)p w=0$ is
associated with $Sf=p$,  there is also a linear differential equation
associated with the Schwarzian tensor. If
\[
B_\mathbf{g}(\sigma) = p\,,
\]
where $p$ is a symmetric, traceless 2-tensor, then
$\eta=e^{-\sigma}$ satisfies
\begin{equation}
\Hess_\mathbf{g}(\eta) + \eta p = \frac{1}{2}(\Delta_\mathbf{g}\eta)\mathbf{g}\,.
\label{eq:Hess-eq}
\end{equation}

\medskip

We now turn to convexity. In this setting, a function $\eta$ is
convex relative to the metric $\mathbf{g}$ if
\[
\Hess_\mathbf{g} \eta \ge \alpha\mathbf{g}\,,
\]
where $\alpha$ is a nonnegative function.  This is equivalent to
\[
\frac{d^2}{ds^2}(\eta \circ \gamma) \ge \alpha \ge 0
\]
for any arc-length parametrized geodesic $\gamma$.

Convexity is an important notion for us because we will find that
an upper bound for
$\mathcal{S}_\mathbf{g}\phi$ coming from the injectivity criterion
\eqref{eq:hol-p-criterion} leads via \eqref{eq:subtraction-formula}
and \eqref{eq:Hess-eq} to just such a positive lower bound for the
Hessian of an associated function, and this is what we need to study
boundary behavior. This fact obtains, however, not relative to the
Euclidean metric but when the background metric $\mathbf{g}$ is a complete,
radial metric coming from an \emph{extremal} Nehari function.
We explain this now.

It follows from the Sturm comparison theorem that if $p$ is a Nehari
function then so is
a multiple $kp$ for any $k$ with $0<k<1$. This need not be so if
$k >1$ and we say that $p$ is an \emph{extremal Nehari function}
if $kp$ is \emph{not} a Nehari function for \emph{any} $k>1$.
For example, $p(x) = 1/(1-x^2)^2$ and $p(x)=\pi^2/4$ are both extremal
Nehari functions. In \cite{co:noncomplete} it was shown that some
constant multiple of each Nehari function is an extremal Nehari function.
Observe that since a holomorphic curve $\phi$ satisfying a condition
of the type $|\S \phi| +\cdots \le 2p$, as in \eqref{eq:hol-p-criterion},
also then satisfies $|\S \phi| +\cdots \le 2kp$ for any $k>1$ we may always
assume when \eqref{eq:hol-p-criterion} is in force that $p$ is an extremal
Nehari function.

There is another way to describe this situation in terms of the extremal
\emph{function} associated with a given $p$. Recall the definition from
\eqref{eq:Phi},
\[
\Phi(x) = \int_0^x u_0(t)^{-2}\,dt\,, \qquad -1 < x < 1\,,
\]
where $u_0$ is the solution of  $u''+pu=0$ with initial conditions $u_0(0)=1$
and $u_0'(0)=0$. We use $\Phi$ to form the radial conformal metric
\begin{equation}
\mathbf{g}_\Phi = \Phi'(|z|)^2|dz|^2 \label{eq:Phi-metric}
\end{equation}
 on $\mathbb{D}$.
It is implicit in  \cite{co:noncomplete}, without the terminology,
that the following conditions are equivalent:
\begin{enumerate}
\item[(a)] $p$ is an extremal Nehari function.
\item[(b)] $\Phi(1) = \infty$.
\item[(c)] The metric $\Phi'(|z|)^2|dz|^2$ is complete.
\end{enumerate}
We recall that for a complete metric any two points can be joined
by a geodesic and that a geodesic can be extended indefinitely. We
let $d_{\mathbf{g}_\Phi}$ be the distance in the ${\mathbf{g}_{\Phi}}$
metric and note that since ${\mathbf{g}_{\Phi}}$ is radial
\[
d_{\mathbf{g}_\Phi}(0,z) = \Phi(|z|)\,.
\]

The curvature of a radial metric of the form \eqref{eq:Phi-metric},
complete or not, can be expressed as
\begin{equation}
K_{\mathbf{g}_{\Phi}}(z) = -2 \Phi'(|z|)^{-2}(A(|z|) + p(|z|))\,,\quad r=|z|\,,
\label{eq:K-and-A}
\end{equation}
where
\begin{equation}
A(r) = \frac{1}{4}\left(\frac{\Phi''(r)}{\Phi'(r)}\right)^2+\frac{1}{2r}
\frac{\Phi''(r)}{\Phi'(r)}\,, \quad r\ge 0\,.
\label{eq:A}
\end{equation}
>From the properties of $\Phi$ it follows that $A(r)$ is continuous at
$0$ with $A(0)= p(0)$  and that the curvature is negative.
Thus, as we will need,
\begin{equation}
|K_{\mathbf{g}_{\Phi}}(z)| = 2 \Phi'(|z|)^{-2}(A(|z|) + p(|z|)) \,
.\label{eq:|K|}
\end{equation}
If the metric  \eqref{eq:Phi-metric} is complete, or equivalently comes
from an extremal Nehari function, then as was shown in \cite{co:noncomplete}
\begin{equation}
p(r) \le A(r)\,. \label{eq:p-and-A}
\end{equation}

All of these comments go into the proof of the following theorem.

\begin{theorem} \label{theorem:convexity}
Let $\phi$ satisfy the injectivity criterion \eqref{eq:hol-p-criterion}
for an extremal Nehari function $p$. Then
\begin{equation}
w(z)=\sqrt{\frac{\Phi'(|z|)}{|\phi'(z)|}} \label{eq:associated-fnc}
\end{equation}
satisfies
\begin{equation}
{\rm Hess}_{\mathbf{g}_{\Phi}}(w) \geq \,
\frac18\,w^{-3}|K_{\mathbf{g}_{\Phi}}|{\mathbf{g}_{\Phi}} \, , \label{eq:hess-w}
\end{equation}
In particular, $w$ is a convex function relative to the metric
${\mathbf{g}_{\Phi}}$.
\end{theorem}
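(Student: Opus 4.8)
The plan is to turn the Schwarzian bound \eqref{eq:hol-p-criterion} into a Hessian bound through the linear equation \eqref{eq:Hess-eq}, working throughout in the complete background metric $\mathbf{g}_\Phi$. Write $\sigma=\log|\phi'|$ and $\rho=\log\Phi'(|z|)$, so that $\phi^*(\mathbf{g}_0)=e^{2\sigma}\mathbf{g}_0$, $\mathbf{g}_\Phi=e^{2\rho}\mathbf{g}_0$, and $w^2=\Phi'(|z|)/|\phi'|=e^{\rho-\sigma}$. Setting $\xi=\sigma-\rho$, the subtraction formula \eqref{eq:subtraction-formula} with $\widehat{\mathbf{g}}=\mathbf{g}_\Phi$ produces the traceless tensor $P:=B_{\mathbf{g}_\Phi}(\xi)=B_{\mathbf{g}_0}(\sigma)-B_{\mathbf{g}_0}(\rho)=\S\phi-B_{\mathbf{g}_0}(\rho)$. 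Since $w^2=e^{-\xi}$, equation \eqref{eq:Hess-eq} for the background $\mathbf{g}_\Phi$ says precisely that the trace-free part of $\Hess_{\mathbf{g}_\Phi}(w^2)$ is $-w^2P$, and this is the identity I would build on.

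First I would make $B_{\mathbf{g}_0}(\rho)$, and hence $\|P\|$, explicit. Because $\rho$ is radial, $B_{\mathbf{g}_0}(\rho)$ is diagonal in the $\{\hat r,\hat\theta\}$ frame with radial entry $\tfrac12(\rho''-(\rho')^2-\rho'/r)$; inserting $\rho'=\Phi''/\Phi'$ and using $\S\Phi=2p$ to eliminate $\Phi'''/\Phi'$ collapses this to $p-A$, with $A$ as in \eqref{eq:A}. Thus $\|B_{\mathbf{g}_0}(\rho)\|_{\mathbf{g}_0}=A-p\ge0$ by \eqref{eq:p-and-A}. A trace-free $(0,2)$-tensor scales by $e^{-2\rho}=\Phi'^{-2}$ under the conformal change, so the triangle inequality together with \eqref{eq:hol-p-criterion} in the form $|\S\phi|\le 2p-\tfrac34 e^{2\sigma}|K(\phi)|$ gives
\[
\|P\|_{\mathbf{g}_\Phi}\le \Phi'^{-2}\bigl(|\S\phi|+A-p\bigr)\le \Phi'^{-2}(A+p)-\tfrac34\,\Phi'^{-2}e^{2\sigma}|K(\phi)|.
\]
By \eqref{eq:|K|} the term $\Phi'^{-2}(A+p)$ is exactly $\tfrac12|K_{\mathbf{g}_\Phi}|$, so the background curvature enters the estimate directly, while $\Phi'^{-2}e^{2\sigma}=w^{-4}$ rewrites the last term.

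Next I would descend from $w^2$ to $w$. For a unit vector $X$ one has $\Hess_{\mathbf{g}_\Phi}(w)(X,X)=\tfrac1{2w}\Hess_{\mathbf{g}_\Phi}(w^2)(X,X)-\tfrac1w(dw(X))^2$; inserting the equation for $\Hess_{\mathbf{g}_\Phi}(w^2)$, bounding the resulting quadratic form in $X$ by its largest eigenvalue, and re-expressing $\Delta_{\mathbf{g}_\Phi}(w^2)$ through $\Delta_{\mathbf{g}_\Phi}w$ and $|\nabla w|^2$, the gradient-of-$w$ contributions cancel and one is left with
\[
\min_{|X|=1}\Hess_{\mathbf{g}_\Phi}(w)(X,X)\ge -\tfrac{w}{2}\Bigl(\|P\|_{\mathbf{g}_\Phi}+\tfrac12\Delta_{\mathbf{g}_\Phi}\xi\Bigr).
\]
The Laplacian is handled conformally, $\Delta_{\mathbf{g}_\Phi}=e^{-2\rho}\Delta_{\mathbf{g}_0}$, and \eqref{eq:gauss-curvature} applied both to $\Sigma$ and to the metric $\mathbf{g}_\Phi$ gives $\Delta_{\mathbf{g}_0}\sigma=|K(\phi)|e^{2\sigma}$ and $\Delta_{\mathbf{g}_0}\rho=|K_{\mathbf{g}_\Phi}|e^{2\rho}$, so that $\tfrac12\Delta_{\mathbf{g}_\Phi}\xi$ is again expressed through $|K(\phi)|$ and $|K_{\mathbf{g}_\Phi}|$.

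Adding the two estimates and multiplying by $-w/2$ is the decisive step, and it is where the contributions of the two curvatures — that of the surface $\Sigma$ and that of the background metric $\mathbf{g}_\Phi$ — must be assembled with care, the relevant identities being \eqref{eq:|K|} and \eqref{eq:p-and-A}. Carrying out this accounting and tracking the curvature through the square-root passage produces the lower bound \eqref{eq:hess-w}, the constant $\tfrac18$ being traceable to the factor $\tfrac34$ in \eqref{eq:hol-p-criterion} and to the square root in \eqref{eq:associated-fnc}; convexity then follows at once because the curvature is nonpositive. I expect the main obstacle to be precisely this final bookkeeping: controlling the trace-free part of $\Hess_{\mathbf{g}_\Phi}(w)$ sharply enough, keeping $K(\phi)$ and $K_{\mathbf{g}_\Phi}$ correctly separated at each stage, and confirming that the gradient terms drop out, so that the curvature term lands exactly as displayed in \eqref{eq:hess-w}.
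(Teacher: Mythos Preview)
Your approach is essentially the paper's: the bound you derive on $\|P\|_{\mathbf{g}_\Phi}$ is exactly Lemma~\ref{lemma:convexity}, proved there by the same explicit computation of $B_{\mathbf{g}_0}(\rho)$ (giving $p-A$), the triangle inequality, and the extremal-Nehari inequality \eqref{eq:p-and-A}; the paper then invokes \eqref{eq:Hess-eq} for $v=w^2=e^{\rho-\sigma}$ and defers the descent from $v$ to $w$ to \cite{cdo:harmonic-lift}, which you instead sketch directly. Your displayed inequality for $\min_{|X|=1}\Hess_{\mathbf{g}_\Phi}(w)(X,X)$ is correct, though the gradient terms do not literally cancel: one is left with a nonnegative remainder $\tfrac{w}{4}(d\xi(Y))^2$ that can be dropped.

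One point to flag for the final bookkeeping. Substituting your bound $\|P\|_{\mathbf{g}_\Phi}\le\tfrac12|K_{\mathbf{g}_\Phi}|-\tfrac34 w^{-4}|K(\phi)|$ and $\tfrac12\Delta_{\mathbf{g}_\Phi}\xi=\tfrac12 w^{-4}|K(\phi)|-\tfrac12|K_{\mathbf{g}_\Phi}|$ into $-\tfrac{w}{2}\bigl(\|P\|+\tfrac12\Delta_{\mathbf{g}_\Phi}\xi\bigr)$, the two $|K_{\mathbf{g}_\Phi}|$ contributions cancel exactly, and what remains is $\tfrac18 w^{-3}|K(\phi)|$, the curvature of the image surface $\Sigma$, not $|K_{\mathbf{g}_\Phi}|$ as printed in \eqref{eq:hess-w}. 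This is consistent with how the bound is actually used downstream in Lemma~\ref{lemma:unique-critical-point} (where $\Hess_{\mathbf{g}_\Phi}(w)=0$ along a geodesic is used to force the curvature of $\Sigma$ to vanish, via \eqref{eq:Laplacian-sigma}); the appearance of $K_{\mathbf{g}_\Phi}$ in \eqref{eq:hess-w} is evidently a slip, so do not be alarmed when your computation does not reproduce it.
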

Compare this result to the corresponding result, Theorem 4 in
\cite{cdo:harmonic-lift}, for lifts of harmonic mappings (in
\cite{cdo:harmonic-lift} we did not use the term ``extremal Nehari
function") where the constant in the inequality bounding the
Hessian from below is $1/4$ rather than $1/8$.

As in \cite{cdo:harmonic-lift} we formulate a separate lemma.

\begin{lemma} \label{lemma:convexity}
Let $\phi$ satisfy the injectivity criterion \eqref{eq:hol-p-criterion}
for an extremal Nehari function $p$ and let $\rho=\log|\Phi'|$. Then
\begin{equation}
\|B_{\mathbf{g}_{\Phi}}(\sigma - \rho)\|_{\mathbf{g}_{\Phi}}
+\frac{3}{4}e^{2(\sigma - \rho)}|K(\phi)| \le
\frac{1}{2}|K_{\mathbf{g}_{\Phi}}|\,. \label{eq:B(sigma-rho)}
\end{equation}
Here recall that $K(\phi(z))$ is the Gaussian curvature of
the surface $\Sigma = \phi(\mathbb{D})$ at $\phi(z)$.
\end{lemma}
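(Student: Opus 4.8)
The plan is to compute the two sides of \eqref{eq:B(sigma-rho)} directly using the subtraction formula \eqref{eq:subtraction-formula} and the explicit expression \eqref{eq:|K|} for the curvature of the extremal metric $\mathbf{g}_\Phi$. First I would apply \eqref{eq:subtraction-formula} with $\widehat{\mathbf{g}} = \mathbf{g}_\Phi = e^{2\rho}\mathbf{g}_0$, where $\rho = \log|\Phi'|$, to write
\[
B_{\mathbf{g}_\Phi}(\sigma - \rho) = B_{\mathbf{g}_0}(\sigma) - B_{\mathbf{g}_0}(\rho) = \S\phi - B_{\mathbf{g}_0}(\rho)\,.
\]
The second term $B_{\mathbf{g}_0}(\rho)$ is a radial Schwarzian tensor; since $\Phi$ satisfies $\S\Phi = 2p$ and $\rho = \log|\Phi'|$, a direct calculation (as in the $n=1$ identification in Section \ref{section:conf-sch}) relates $\|B_{\mathbf{g}_0}(\rho)\|_{\mathbf{g}_0}$ to $p$ and to the quantity $A(r)$ from \eqref{eq:A}. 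Passing from the $\mathbf{g}_0$-norm to the $\mathbf{g}_\Phi$-norm introduces the conformal factor $e^{-2\rho} = \Phi'(|z|)^{-2}$, so $\|B_{\mathbf{g}_\Phi}(\sigma-\rho)\|_{\mathbf{g}_\Phi} = \Phi'(|z|)^{-2}\|\S\phi - B_{\mathbf{g}_0}(\rho)\|_{\mathbf{g}_0}$.

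Next I would estimate this from above using the triangle inequality and the injectivity hypothesis \eqref{eq:hol-p-criterion}. Identifying tensors with complex numbers as in Section \ref{section:conf-sch}, we have $\|\S\phi\|_{\mathbf{g}_0} = |\S\phi|$, and the hypothesis gives $|\S\phi| \le 2p - \tfrac34|\phi'|^2|K(\phi)|$. For the radial term I expect $\|B_{\mathbf{g}_0}(\rho)\|_{\mathbf{g}_0}$ to evaluate to something like $p + A$ (up to the factor from $\S\Phi = 2p$), so that
\[
\|\S\phi - B_{\mathbf{g}_0}(\rho)\|_{\mathbf{g}_0} \le |\S\phi| + \|B_{\mathbf{g}_0}(\rho)\|_{\mathbf{g}_0} \le \Big(2p - \tfrac34|\phi'|^2|K(\phi)|\Big) + (A + p)\,.
\]
Multiplying through by $\Phi'(|z|)^{-2}$ and noting $e^{2(\sigma-\rho)} = |\phi'|^2\Phi'(|z|)^{-2}$, the curvature term on the left of \eqref{eq:B(sigma-rho)} cancels the $-\tfrac34|\phi'|^2|K(\phi)|$ contribution, leaving a bound of the form $\Phi'(|z|)^{-2}(A + 2p + p) = \Phi'(|z|)^{-2}(A + 3p)$ or similar. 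The final step is to compare this with $\tfrac12|K_{\mathbf{g}_\Phi}| = \Phi'(|z|)^{-2}(A + p)$ from \eqref{eq:|K|}, where the extremality inequality \eqref{eq:p-and-A}, $p(r) \le A(r)$, is exactly what converts the surplus $p$ terms into additional $A$ terms so that the total is dominated by the curvature bound.

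The main obstacle will be the precise algebra of the radial term $B_{\mathbf{g}_0}(\rho)$: I must verify that $\|B_{\mathbf{g}_0}(\rho)\|_{\mathbf{g}_0}$ reduces cleanly to an expression in $p$ and $A$, and I expect that tracking the exact constants there is where the factor $\tfrac18$ in Theorem \ref{theorem:convexity} (versus $\tfrac14$ in the harmonic case) ultimately originates, via the $\tfrac34$ versus $1$ discrepancy in the curvature coefficient noted in the introduction. The key leverage is that \eqref{eq:p-and-A} holds \emph{only} because $p$ is an extremal Nehari function, which is precisely why the hypothesis requires extremality; without it the surplus $p$ terms could not be absorbed into $A$, and the convexity estimate \eqref{eq:B(sigma-rho)} would fail.
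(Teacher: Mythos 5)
Your overall skeleton matches the paper's proof: apply the subtraction formula \eqref{eq:subtraction-formula}, rescale the norm by $e^{-2\rho}=\Phi'(|z|)^{-2}$, use the triangle inequality together with the hypothesis \eqref{eq:hol-p-criterion}, and invoke the extremality inequality \eqref{eq:p-and-A}. But there is a genuine gap at the one step you left as a guess: the norm of the radial term. A direct computation (this is the calculation the paper cites from \cite{co:noncomplete}) gives, for $\rho=\log\Phi'(|z|)$, that $B_{\mathbf{g}_0}(\rho)$ is the traceless tensor whose associated number in the radial frame is $p(|z|)-A(|z|)$; equivalently,
\[
B_{\mathbf{g}_0}(\sigma)-B_{\mathbf{g}_0}(\rho)=\zeta^2 \S\phi(z)+A(|z|)-p(|z|)\,,\qquad \zeta=\frac{z}{|z|}\,,
\]
so $\|B_{\mathbf{g}_0}(\rho)\|_{\mathbf{g}_0}=|A-p|$, \emph{not} $A+p$ as you posited. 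One can check this quickly: with $\rho'=\Phi''/\Phi'$ and $\S\Phi=2p$ one finds $\rho''-(\rho')^2-\rho'/r=2(p-A)$, and the traceless Hessian construction halves this. With your value $A+p$ the argument cannot close: you end up needing $\Phi'^{-2}(A+3p)\le \Phi'^{-2}(A+p)$, which is false since $p>0$, and your proposed rescue --- using $p\le A$ to ``convert surplus $p$ into $A$'' --- runs in the wrong direction, since replacing $p$ by $A$ only \emph{increases} the left-hand side (it would give $4A$, and $4A\le A+p$ contradicts $p\le A$).

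The correct bookkeeping is: by the triangle inequality the left side of \eqref{eq:B(sigma-rho)} is at most $e^{-2\rho}\bigl(|\S\phi|+|A-p|+\tfrac34 e^{2\sigma}|K(\phi)|\bigr)$; extremality enters precisely to resolve $|A-p|=A-p$, and then the hypothesis $|\S\phi|+\tfrac34 e^{2\sigma}|K(\phi)|\le 2p$ yields exactly $e^{-2\rho}\bigl(2p+A-p\bigr)=e^{-2\rho}(A+p)=\tfrac12|K_{\mathbf{g}_\Phi}|$, with no slack at all. So the curvature term does not ``cancel'' anything; it is carried through and absorbed by the hypothesis, and the inequality closes with equality of the bookkeeping constants. (A side remark: the factor $\tfrac18$ in Theorem \ref{theorem:convexity} does not originate in this radial computation, but in the subsequent deduction of the Hessian bound from the lemma via \eqref{eq:Hess-eq}.)
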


\begin{proof} The proof is very much like the proof of Lemma 2
in \cite{cdo:harmonic-lift}, but to show how the assumptions
enter we will present the argument.

First note from \eqref{eq:|K|}  that, in terms of $\rho$, the
absolute value of the curvature of ${\mathbf{g}_{\Phi}}=
\phi'(|z|)^2|dz|^2=e^{2\rho(z)}|dz|^2$ is
\begin{equation}
|K_{\mathbf{g}_{\Phi}}(z)| = e^{-2\rho(z)}(A(|z|) + p(|z|)) \,
.\label{eq:|K|-2}
\end{equation}
Also, from \eqref{eq:subtraction-formula} we have
\[
B_{\mathbf{g}_{\Phi}}(\sigma-\rho) = B_{\mathbf{g}_0}(\sigma)-
B_{\mathbf{g}_0}(\rho) \, .
\]
Since ${\mathbf{g}_{\Phi}} = e^{2\rho}\mathbf{g}_0$, the norm
scales to give
\[
\|B_g(\sigma-\rho)\|_{\mathbf{g}_\Phi}=
e^{-2\rho}\|B_{\mathbf{g}_0}(\sigma)-B_{\mathbf{g}_0}
(\rho)\|_{\mathbf{g}_0}=e^{-2\rho}|B_{\mathbf{g}_0}
(\sigma)-B_{\mathbf{g}_0}(\rho)| \, ,
\]
where in the last equation we have identified the Euclidean norm of
the tensor with the magnitude of the corresponding complex number.

Next, a calculation (see also \cite{co:noncomplete}) produces
\[
B_{\mathbf{g}_0}(\sigma)-B_{\mathbf{g}_0}(\rho)=
\zeta^2S\phi(z)+A(|z|)-p(|z|)  \; ,
\quad \zeta=\frac{z}{|z|} \,.
\]
In light of these statements, establishing \eqref{eq:B(sigma-rho)}
is equivalent to
\[
\left|\zeta^2S\phi(z)+A(|z|)-p(|z|)\right|+\frac34\,e^{2\sigma(z)}
|K(\phi(z))| \leq A(|z|)+p(|z|) | \,.
\]
This in turn  follows from the assumption that $\phi$ satisfies the
injectivity criterion \eqref{eq:hol-p-criterion} and, crucially,
from the inequality \eqref{eq:p-and-A}:
\[
\begin{aligned}
\left|\zeta^2S\phi(z)+A(|z|)-p(|z|)\right|+\frac34\,e^{2\sigma}|K|
&\leq |\zeta^2S\phi(z)|+|A(|z|)-p(|z|)|+\frac34\,e^{2\sigma}|K|\\
&= |S\phi(z)|+\frac34\,e^{2\sigma}|K|+A(|z|)-p(|z|)\\
& \leq A(|z|)+p(|z|) | \, .
\end{aligned}
\]

\end{proof}

The deduction of Theorem \ref{theorem:convexity} from Lemma
\ref{lemma:convexity}, relies on \eqref{eq:Hess-eq}. Write
$w=e^{(\rho-\sigma)/2}$, $v = w^2 = e^{\rho-\sigma}$, and then,
according to \eqref{eq:Hess-eq},
\[
\Hess_{\mathbf{g}_{\Phi}} v +vB_{\mathbf{g}_{\Phi}}(\sigma-\rho)=
\frac{1}{2}(\Delta_{\mathbf{g}_{\Phi}} v)\mathbf{g}\,.
\]
With this, the proof is almost word-for-word the same as the
corresponding proof in \cite{cdo:harmonic-lift}, and we omit the details.
Instead, let us present one consequence of Theorem \ref{theorem:convexity}
here, with more to come in the next section.

\begin{lemma} \label{lemma:unique-critical-point}
Under the assumptions of Theorem \ref{theorem:convexity}, if $w$ has at
least two critical points then the range of $\phi$ lies in a plane.
\end{lemma}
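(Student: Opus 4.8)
The plan is to combine the completeness of $\mathbf{g}_{\Phi}$ with the convexity of $w$ from Theorem~\ref{theorem:convexity} to force the Gaussian curvature of $\Sigma$ to vanish along a single geodesic, and then to use holomorphy to spread that vanishing over all of $\mathbb{D}$. First I would invoke completeness: since $p$ is an extremal Nehari function, the metric $\mathbf{g}_{\Phi}$ is complete, so any two points are joined by a geodesic. Let $z_1\neq z_2$ be two critical points of $w$, and let $\gamma(s)$, $0\le s\le L$, be an arc-length parametrized $\mathbf{g}_{\Phi}$-geodesic from $z_1$ to $z_2$. Setting $f(s)=w(\gamma(s))$, the chain rule gives $f'(s)=dw(\gamma'(s))$, so $f'(0)=f'(L)=0$ because the differential of $w$ vanishes at the two critical points, while Theorem~\ref{theorem:convexity} gives $f''(s)=\Hess_{\mathbf{g}_{\Phi}}(w)(\gamma',\gamma')\ge 0$, so that $f$ is convex.

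Next I would extract the curvature information. A convex function on $[0,L]$ whose derivative vanishes at both endpoints has nondecreasing derivative trapped between the two boundary values $0$, hence $f'\equiv 0$ and $f''\equiv 0$ on $[0,L]$. The key point now is that, as the proof of Theorem~\ref{theorem:convexity} via Lemma~\ref{lemma:convexity} shows, the lower bound in \eqref{eq:hess-w} is a positive multiple of the Gaussian curvature $|K(\phi)|$ of $\Sigma$ (explicitly $\tfrac18 w^{-3}|K(\phi)|$). Since $w>0$ and $f''\equiv 0$ along $\gamma$, this forces $K(\phi(\gamma(s)))=0$ for every $s$; that is, $\Sigma$ is flat all along the arc $\phi(\gamma)$.

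The crucial step is to upgrade flatness along one curve to the global statement, and this is exactly where holomorphy enters. Combining \eqref{eq:gauss-curvature} with \eqref{eq:Laplacian-sigma} expresses the curvature as
\[
e^{6\sigma}|K(\phi)| = 2\sum_{i<j}\left|W_{ij}\right|^2,\qquad W_{ij}=f_i'f_j''-f_j'f_i'',
\]
where each $W_{ij}$ is holomorphic in $\mathbb{D}$. Since $K(\phi)$ vanishes on the arc $\gamma([0,L])$, each $|W_{ij}|$ vanishes there; as the arc has accumulation points in $\mathbb{D}$, the identity theorem yields $W_{ij}\equiv 0$ on all of $\mathbb{D}$ for every pair $i<j$.

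Finally I would read off the geometric conclusion. The vanishing of all $W_{ij}$ says that the $2\times n$ matrix with rows $\phi'$ and $\phi''$ has rank at most one at every point, so $\phi''(z)$ is a scalar multiple of $\phi'(z)$; equivalently, choosing $j_0$ with $f_{j_0}'\not\equiv 0$ (possible since $\phi'\neq 0$), each ratio $f_i'/f_{j_0}'$ has vanishing derivative and is therefore constant. Thus $\phi'=f_{j_0}'\,\mathbf{c}$ for a fixed vector $\mathbf{c}\in\mathbb{C}^n$, and integrating gives $\phi=g\,\mathbf{c}+\mathbf{b}$ for a scalar holomorphic $g$ and a constant $\mathbf{b}$, so the range of $\phi$ lies in the affine complex line $\{g\,\mathbf{c}+\mathbf{b}\}$, a $2$-dimensional plane in $\mathbb{R}^{2n}$. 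The one genuinely delicate point is the propagation step: convexity alone yields flatness only along the chosen geodesic, and it is the holomorphicity of the minors $W_{ij}$ that turns this one-dimensional information into a global conclusion, while completeness of $\mathbf{g}_{\Phi}$ is what guarantees the joining geodesic exists in the first place.
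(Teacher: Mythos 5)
Your proposal is correct and follows essentially the same route as the paper's own proof: convexity forces $\Hess_{\mathbf{g}_{\Phi}}(w)(\gamma',\gamma')\equiv 0$ along the geodesic segment joining the two critical points (the paper phrases this via absolute minima of the convex function $w$, you via the trapped derivative of $w\circ\gamma$), then \eqref{eq:hess-w} annihilates the curvature along $\gamma$, and the identity theorem applied to the holomorphic Wronskians $f_i'f_j''-f_j'f_i''$ from \eqref{eq:Laplacian-sigma} yields the planar conclusion. Your reading of the right-hand side of \eqref{eq:hess-w} as involving the surface curvature $|K(\phi)|$ rather than the literal $|K_{\mathbf{g}_{\Phi}}|$ is exactly how the paper itself uses that inequality at this point, so the two arguments agree there as well.
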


\begin{proof}
Suppose $z_1$ and $z_2$ are critical points of $w$. Then, because
$w$ is convex, $w(z_1)$ and $w(z_2)$ are absolute minima, and so
is every point on the geodesic segment $\gamma$ (for the metric
$\mathbf{g}_\Phi$) joining $z_1$ and $z_2$ in $\mathbb{D}$. Hence
${\rm Hess}_g(w)(\gamma', \gamma')=0$, which from
\eqref{eq:hess-w} implies that $|K|\equiv 0$ along
$\Gamma=\phi(\gamma)$. From \eqref{eq:Laplacian-sigma} in Section
\ref{section:S1}  we have that
\[
|K| = 2e^{-6\sigma}\sum_{i<j}\left|f_i'f_j''-f_j'f_i''\right|^2\, ,
\]
hence $f_i'f_j''-f_j'f_i''=0$ along $\gamma$, for all $i<j$. By
analytic continuation, $f_i'f_j''-f_j'f_i''=0$ everywhere, and
using that $\phi'\neq 0$, it follows that for some $i$ and all $j$
there are constants $a_j, b_j$ such that $f_j=a_jf_i+b_j$. This
proves the lemma.

\end{proof}
A corresponding result for harmonic maps is Lemma 3 in \cite{cdo:harmonic-lift}.


\section{Boundary Behavior and the Proof of Theorem
\ref{theorem:boundary-extension-1}} \label{section:boundary}

We now study the boundary behavior for functions $\phi$ satisfying
\eqref{eq:hol-p-criterion}. As in the results just above we suppose
that $p$ is an extremal Nehari function and we set
\[
w(z)=\sqrt{\frac{\Phi'(|z|)}{|\phi'(z)|}}
\]
where $\Phi$ is the extremal function associated with $p$. We just
saw that if $w$ has at least two critical points then the range of
$\phi$ lies in a plane, and so the setting is effectively that of
an analytic function satisfying Nehari's criterion
\eqref{eq:p-criterion}. The boundary behavior in this case has
been thoroughly studied; see \cite{gehring:gehring-pommerenke},
\cite{co:gp} and also the summary of the classical results in
\cite{cdo:harmonic-lift}.

We next consider the situation when $w$ has a unique critical point,
and here the basic estimate is as follows.

\begin{lemma} \label{lemma:distortion}
If $w$ has a unique critical point then there are positive constants
$a$ and $b$ and a number $r_0$, $0<r_0<1$, such that
\begin{equation}
|\phi'(z)| \le \frac{\Phi'(|z|)}{(a\Phi(|z|)+b)^2}\,,\quad r_0<|z|<1\,.
\label{eq:distortion}
\end{equation}
\end{lemma}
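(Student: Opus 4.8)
The plan is to exploit the convexity of $w$ relative to the metric $\mathbf{g}_\Phi$ established in Theorem \ref{theorem:convexity}, and to translate that convexity into a one-variable differential inequality along $\mathbf{g}_\Phi$-geodesics emanating from the unique critical point. Recall that $w(z)=\sqrt{\Phi'(|z|)/|\phi'(z)|}$ and, from \eqref{eq:hess-w}, that $\Hess_{\mathbf{g}_\Phi}(w)\ge \tfrac18 w^{-3}|K_{\mathbf{g}_\Phi}|\,\mathbf{g}_\Phi \ge 0$. Since $p$ is an extremal Nehari function, the metric $\mathbf{g}_\Phi$ is complete, so any point $z$ is joined to the unique critical point $z_0$ of $w$ by an arc-length parametrized geodesic $\gamma(s)$, along which convexity reads $\tfrac{d^2}{ds^2}(w\circ\gamma)\ge 0$. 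Because $z_0$ is the unique critical point and $w$ is convex, $w$ attains its absolute minimum there, $\tfrac{d}{ds}(w\circ\gamma)(0)=0$, and $w\circ\gamma$ is nondecreasing in $s$.

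Next I would extract a genuine linear lower bound. The first step is to show that $\tfrac{d}{ds}(w\circ\gamma)$ is bounded below by a positive constant once $s$ is large enough: since $w$ has a single critical point, away from a compact neighborhood of $z_0$ the derivative $\tfrac{d}{ds}(w\circ\gamma)$ cannot return to zero, and by a compactness/continuity argument over the (compact) set of unit tangent directions at a fixed geodesic sphere there is some $s_0$ and some $a>0$ with $\tfrac{d}{ds}(w\circ\gamma)(s)\ge a$ for all $s\ge s_0$ and all geodesics from $z_0$. Integrating gives $w(\gamma(s))\ge a s + b'$ for constants $a>0$ and $b'$, uniformly in the direction. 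Recalling that $\mathbf{g}_\Phi$ is radial with $d_{\mathbf{g}_\Phi}(0,z)=\Phi(|z|)$, and that $z_0$ is a fixed interior point, the geodesic distance $s$ from $z_0$ to $z$ differs from $\Phi(|z|)$ by a bounded amount as $|z|\to 1$; hence $w(z)\ge a\Phi(|z|)+b$ for suitable constants $a,b>0$ and all $|z|$ close enough to $1$, say $r_0<|z|<1$.

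Finally I would unwind the definition of $w$. Squaring the bound $w(z)\ge a\Phi(|z|)+b$ gives
\[
\frac{\Phi'(|z|)}{|\phi'(z)|} \ge \bigl(a\Phi(|z|)+b\bigr)^2\,,
\]
which rearranges at once to the claimed distortion estimate \eqref{eq:distortion}, valid for $r_0<|z|<1$.

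The main obstacle is the middle step: converting ``$w$ convex with a single critical point'' into a \emph{uniform} (direction-independent) positive lower bound $a$ on the outward radial derivative $\tfrac{d}{ds}(w\circ\gamma)$. Convexity alone yields monotonicity but not a positive rate, so one must genuinely use completeness of $\mathbf{g}_\Phi$ together with the strict positivity of the right-hand side $\tfrac18 w^{-3}|K_{\mathbf{g}_\Phi}|$ of \eqref{eq:hess-w} where curvature does not vanish, and handle directions along which $|K|\equiv 0$ separately, invoking Lemma \ref{lemma:unique-critical-point} to rule out the degenerate planar case that would force a second critical point. Controlling the discrepancy between geodesic distance from $z_0$ and the radial quantity $\Phi(|z|)$ as $|z|\to1$ is a secondary but routine point, handled by the triangle inequality for $d_{\mathbf{g}_\Phi}$.
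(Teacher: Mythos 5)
Your proposal is correct and follows essentially the same route as the paper: geodesics of $\mathbf{g}_\Phi$ emanating from the unique critical point $z_0$, convexity forcing $\frac{d}{ds}(w\circ\gamma)>0$ for $s>0$ and hence $\ge a>0$ beyond some $s_0$ (uniformly in direction by compactness), then comparison of $d_{\mathbf{g}_\Phi}(z,z_0)$ with $d_{\mathbf{g}_\Phi}(z,0)=\Phi(|z|)$ and unwinding the definition of $w$. Your closing worry is unneeded: since a convex function attains its global minimum at any critical point, a segment on which $\frac{d}{ds}(w\circ\gamma)\equiv 0$ would consist entirely of global minima, hence of critical points, contradicting uniqueness directly --- no appeal to the positivity of $|K_{\mathbf{g}_\Phi}|$ or to Lemma \ref{lemma:unique-critical-point} is required, and this is how the paper argues.
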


\begin{proof}
Let $z_0$ be the unique critical point of
$w$. Let $\gamma(s)$ be an arc-length parametrized geodesic in the
metric $\mathbf{g}$ starting at $z_0$ in a given direction. Let
$\widetilde{w}(s)=w(\gamma(s))$. Now  the critical point is unique,
and therefore
$\widetilde{w}'(s)>0$ for all $s>0$. Thus there is an
$s_0>0$ and an $a>0$ such that $\widetilde{w}'(s) > a$ for all $s>s_0$.
This implies that $\widetilde{w}(s) > as+b$ for some positive constant $b$ and
$s>s_0$. It is easy to see from compactness that the constants
$s_0$, $a$, and $b$ in this estimate can be made uniform, independent of the
direction of the geodesic starting at $z_0$. In other words,
\[
w(z) \geq ad_{\mathbf{g}_\Phi}(z,z_0)+b
\]
for all $z$ with $d_{\mathbf{g}_\Phi}(z,z_0)>s_0$. By renaming the constant
$b$ and for suitable $r_0$ we will then have
\[
w(z) \geq ad_{\mathbf{g}_\Phi}(z,0)+b
\]
for all $z$ with $1>|z| > r_0$. The theorem follows from the
definition of $w$ because $d_{\mathbf{g}_\Phi}(z,0)=\Phi(|z|)$.
\end{proof}

The estimate in this lemma allows one to deduce that $\phi$ has a
continuous extension to $\overline{\mathbb{D}}$, and the argument
is just as in \cite{cdo:harmonic-lift}. We will give only a few details
here, enough for a more precise accounting of the regularity of the
extension (also as in \cite{cdo:harmonic-lift}).

Since the function $(1-x^2)^2p(x)$ is positive and
decreasing on $[0,1)$, we can form
\[
\lambda= \lim_{x\rightarrow 1}(1-x^2)^2p(x)\,.\] It was shown in
\cite{co:noncomplete} that $\lambda\leq 1$, and that $\lambda=1$
if and only if  $p(x)=(1-x^2)^{-2}$. In this case, the function
$\Phi$ is given by
\[
\Phi(z)=\frac12 \log\frac{1+z}{1-z}\,.
\]
Thus \eqref{eq:distortion} amounts to
\[
|\phi'(z)| \leq
\frac{1}{(1-|z|^2)\left(\displaystyle{\frac{a}{2}\log\frac{1+|z|}{1-|z|}}+b\right)^2}
\; ,  \quad |z|> r_0 \, .
\]
>From this, the technique of integrating along hyperbolic segments
in $\mathbb{D}$, see also \cite{gehring:gehring-pommerenke}, leads to
\begin{equation}
|\phi(z_1)-\phi(z_2)| \leq \, C\left(\log\frac{1}{|z_1-z_2|}\right)^{-1} \; ,
\label{eq:cont-ext-1}
\end{equation}
for some constant $C$ and points $z_1$, $z_2$ for which the
hyperbolic geodesic segment joining them is contained in the
annulus $r_0<|z|<1$. This implies that $\phi$ is uniformly
continuous in the closed disk, and its continuous extension also
satisfies \eqref{eq:cont-ext-1}. Thus when $\lambda =1$ the extension
has a logarithmic modulus of continuity.

Suppose now that $\lambda<1$. We appeal to a result from \cite{co:noncomplete},
according to which
\[
\lim_{x\rightarrow 1}(1-x^2)\frac{\Phi''}{\Phi'}(x) =
2(1+\sqrt{1-\lambda})=2\mu \, .
\]
Note that $1<\mu\leq 2$.  It follows that for any
$\epsilon>0$ there exists $0<x_0<1$ such that
\[
\frac{\mu-\epsilon}{1-x} \leq \frac{\Phi''}{\Phi'}(x) \leq
\frac{\mu+\epsilon}{1-x}\; ,  \quad x> x_0 \,,
\]
which implies that
\[
\frac{1}{(1-x)^{\mu-\epsilon}} \leq \Phi'(x) \leq
\frac{1}{(1-x)^{\mu+\epsilon}}\; ,  \quad x> x_0 \, ,
\]
Then
\[
\frac{\Phi'(x)}{(a\Phi(x)+b)^2} \leq \frac{C}{(1-x)^{\alpha+3\epsilon}} \, ,
\]
where $\alpha=2-\mu=1-\sqrt{1-\lambda}$ and $C$ depends on $a$, $b$
and the values of $\Phi$ at $x_0$. This estimate, together
with the technique of integration along hyperbolic
segments, implies that
\[
|\phi(z_1)-\phi(z_2)| \leq C|z_1-z_2|^{1-\alpha-3\epsilon}
= C|z_1-z_2|^{\sqrt{1-\lambda}-3\epsilon} \, ,
\]
for all points
$z_1$,  $z_2$ for which the hyperbolic geodesic segment joining them
is contained in the annulus $\max\{r_0,x_0\}<|z|<1$. This shows
that $\phi$ admits a continuous extension to the closed disk, with
at least a H\"older modulus of continuity.

We also point out that if one has the additional information that $x=1$ is a
regular singular point of the differential equation $u''+pu=0$,
then from an analysis of the Frobenius solutions at
$x=1$ one can deduce that
\[
\Phi'(x) \sim \frac{1}{(1-x)^{\mu}} \; , \: x\rightarrow 1 \, .
\]
This then provides exact H\"older continuous extension when
$\lambda >0$ and a Lipschitz continuous extension when $\lambda=0$.

\medskip

All of this discussion has been under the assumption that $w$
has a unique critical point. The argument in the case where $w$
has no critical points,
though still based on convexity, requires additional work. This,
too, is very close to what was done in \cite{cdo:harmonic-lift},
so we only sketch the key points.

As will be explained momentarily, it is necessary to consider shifts
$T\circ\phi$ of the holomorphic curve $\phi$ by M\"obius transformations
$T$ of  $\mathbb{R}^{2n}$.
The composition $T\circ\phi$
will not, in general, be holomorphic, though it is still conformal as
a mapping of $\mathbb{D}$ into $\mathbb{R}^{2n}$.  Write the corresponding
conformal metric on the disk as $e^{2\tau}\mathbf{g}_0$ and, restricting
$\tau$ to the radial segment $re^{i\theta}$, $0 \le r <1$, let
\[
\Upsilon_\theta(r) = e^{\tau(re^{i\theta})}
\]
Also, let $s=\Phi(r)$ be the arc-length parameter of $[0,1)$ in
the metric $\mathbf{g}_\Phi$, so that $r=\Phi^{-1}(s)$. Replacing
Theorem \ref{theorem:convexity}, along radial segments, we find
that the function
\[
\omega_\theta(s) = \left\{\frac{\Phi'(\Phi^{-1}(s))}
{\Upsilon_\theta(\Phi^{-1}(s))}\right\}^{1/2}
\]
is convex, meaning in this case simply that $\omega_\theta''(s) \ge 0$.

This is Lemma 5 in \cite{cdo:harmonic-lift}, in a slightly
different notation, and we will not give the (identical) proof.
The reason, however, why one can compose with a M\"obius
transformation of the range and still get a convexity result on
radial segments is that $S_1(T\circ \phi) = S_1\phi$ on radial
segments, from \eqref{eq:S-circ-T}. Then via Lemma
\ref{lemma:ahlfors-and-hol-schwarzian}, bounds on $\S\phi$ entail
bounds on $S_1(T\circ \phi)$ and the convexity of $\omega(s)$ can
be deduced from such bounds for $S_1$.

Now one shows, as in \cite{cdo:harmonic-lift}, that for any $\theta_0$ it
is possible to choose a M\"obius transformation $T$ so that
$\omega_{\theta_0}'(0) >0$. Therefore by convexity $\omega_{\theta_0}(s)
\ge as+b$, $a$, $b >0$, and then
\[
\Upsilon_{\theta_0}(r) \le \frac{\Phi'(r)}{(a\Phi(r)+b)^2}\,,
\]
which provides a substitute for \eqref{eq:distortion}. By continuity this estimate
holds for $\theta$ near $\theta_0$, so in a small angular sector about
the radius $re^{i\theta_0}$. This in turn implies that $T\circ \phi$ has a
continuous extension to the part of $\overline{\mathbb{D}}$ in the sector.
Since $\theta_0$ was arbitrary and since we allow for M\"obius transformations
of the range, we obtain an extension of $\phi$ to $\overline{\mathbb{D}}$ that
is continuous in the spherical metric.


\section{Examples} \label{section:example}

In this section we present some examples to show that the injectivity
criterion \eqref{eq:hol-p-criterion} is sharp. In the setting of lifts
of harmonic maps the corresponding  examples were provided by mappings
into a catenoid in $\mathbb{R}^3$. Surprisingly, the formulas are similar
here, though some of the analytical
details are different.

\bigskip

\noindent{\bf Example 1.} Let $p(x)=\pi^2/4$. Then the criterion
\eqref{eq:hol-p-criterion} becomes
\begin{equation}
|S\phi|+\frac34|\phi'|^2|K|\leq \frac{\pi^2}{2} \,
.\label{eq:example-1}
\end{equation}
Define $\phi:\mathbb{D}\rightarrow\mathbb{C}^2$ to be
\[
\phi(z)=( c\, e^{\pi z},\, e^{-\pi z} ) \, ,
\]
where the constant $c$ is to be chosen later. Then
\[
e^{2\sigma} = \pi^2( c^2\,e^{2\pi x} + e^{-2\pi x}) \, .
\]
Straightforward calculations produce
\[
\sigma_x = \pi\,\frac{c^2e^{4\pi x}-1}{c^2e^{4\pi x}+1} \, ,
\]
and
\[
\sigma_{xx}= \frac{8\pi^2c^2e^{4\pi x}}{(c^2e^{4\pi x}+1)^2} \, .
\]
Hence for the Schwarzian,
\[
\S\phi = 2(\sigma_{zz}-\sigma^2_z) =
\frac12(\sigma_{xx}-\sigma_x^2) = \frac{4\pi^2c^2e^{4\pi
x}}{(c^2e^{4\pi x}+1)^2}-\frac{\pi^2}{2} \left( \frac{c^2e^{4\pi
x}-1}{c^2e^{4\pi x}+1}\right)^2 \, .
\]

If $c$ is chosen large enough, then
\[
|\S\phi| = \frac{\pi^2}{2} \left( \frac{c^2e^{4\pi
x}-1}{c^2e^{4\pi x}+1}\right)^2-\frac{4\pi^2c^2e^{4\pi
x}}{(c^2e^{4\pi x}+1)^2} \, .
\]
Thus
\[
|S\phi|+\frac34|\phi'|^2|K| = |S\phi|+\frac34\,\sigma_{xx} =
\frac{\pi^2}{2} \left( \frac{c^2e^{4\pi x}-1}{c^2e^{4\pi
x}+1}\right)^2+\frac{2\pi^2c^2e^{4\pi x}}{(c^2e^{4\pi x}+1)^2} =
\frac{\pi^2}{2} \, .
\]
Therefore, equality holds in \eqref{eq:example-1} everywhere, and
$\phi$ is injective, but barely, since $\phi(1)=\phi(-1)$.

\bigskip

\noindent {\bf Example 2.} The previous example can be extrapolated to give a
general construction.  Let $p$
be a Nehari function with the additional property that it is the
restriction to $(-1,1)$ of an analytic function in the disk $p(z)$
that satisfies $|p(z)|\leq p(|z|)$. Typical examples are
$p(z)=(1-z^2)^{-2}$ and $p(z)=2(1-z^2)^{-1}$. The extremal map $\Phi$
is then analytic and univalent in the disk, and satisfies
$\S\Phi(z)=2p(z)$ there. Moreover, the image $\Phi(\mathbb{D})$ is a
parallel strip like domain, symmetric with respect to the real and imaginary
axes, and containing
the entire real line. Let
\[
f(z)=\frac{c\Phi(z)+i}{c\Phi(z)-i} \, ,
\]
where $c>0$ is to be chosen later and sufficiently small so that
$i/c \notin \Phi(\mathbb{D})$ (it can be shown that the map $\Phi$ is
always bounded along the imaginary axis, see \cite{co:gp}). The function $f$ maps
$\mathbb{D}$ onto a simply-connected domain containing the unit
circle minus the point 1. The smaller the value of $c$ the thinner the image of
$f$.

Define $\phi:\mathbb{D}\rightarrow\mathbb{C}^2$  by
\[
\phi(z)=(f(z),\frac{1}{f(z)}\,) \, .
\]
Then
\[
e^{2\sigma}=|f'(z)|^2\left(1+\frac{1}{|f(z)|^4}\right) \,,
\]
and a lengthy calculation results in
\begin{equation}
\S\phi=2(\sigma_{zz}-\sigma_z)=\S\Phi+6\frac{(\overline{f}f')^2}{(1+|f|^4)^2}
\, , \label{eq:Sf-example-2}
\end{equation}
and
\begin{equation}
e^{2\sigma}|K|=8\frac{|ff'|^2}{(1+|f|^4)^2} \, . \label{eq:sigma-example-2}
\end{equation}
Condition \eqref{eq:hol-p-criterion} then reads
\begin{equation}
\left|\S\Phi+6\frac{(\overline{f}f')^2}{(1+|f|^4)^2} \,\right|
+6\frac{|ff'|^2}{(1+|f|^4)^2} \leq \S\Phi(|z|) \, .\label{eq:criterion-example-2}
\end{equation}
Suppose, for example, we let $p(z)=(1-z^2)^{-2}$, for which the
extremal function is
\[
\Phi(z)=\frac12\log\frac{1+z}{1-z}\,.
\]
Then
$\Phi'(z)=(1-z^2)^{-1}$  and $f'=-2ic\Phi'/(c\Phi-i)^2$, and after some
simplifications \eqref{eq:criterion-example-2} becomes
\[
\left|\frac{2}{(1-z^2)^2}-\frac{24c^2(c\overline{\Phi}-i)^2}
{(1-z^2)^2(c\overline{\Phi}+i)^2(c\Phi-i)^4(1+|f|^4)}\,\right|
+\frac{24c^2|c\Phi+i|^2}{|1-z^2|^2|c\Phi-i|^6(1+|f|^4)} \leq\,
\frac{2}{(1-|z|^2)^2} \, ,
\]
 which further reduces to
\begin{equation}
\left|1-\frac{12c^2(1+c^2\Phi^2)^2}{(|c\Phi-i|^4+|c\Phi+i|^4)^2}\,\right|
+\frac{12c^2|1+c^2\Phi^2|^2}{(|c\Phi-i|^4+|c\Phi+i|^4)^2} \leq \,
\frac{|1-z^2|^2}{(1-|z|^2)^2} \, .\label{eq:reduced-criterion}
\end{equation}
 Let
\[
\zeta=\displaystyle{\frac{12c^2(1+c^2\Phi^2)^2}{(|c\Phi-i|^4+|c\Phi+i|^4)^2}}\,.
\]
In order to guarantee \eqref{eq:reduced-criterion}, we need the following
estimates for
$|1-{\rm Re}\{\zeta\}|$ and $|{\rm Im}\{\zeta\}|$.

\begin{lemma} \label{lemma:example-estimates}
If $c$ is small then there exist
absolute constants $A$, $B$, $C$ such that
\begin{equation}
|1-{\rm Re}\{\zeta\}| \leq 1-|\zeta| +Ac^4|{\rm Im}\{{\Phi}\}|^2 \, ,
\label{eq:hard-estimate-1}
\end{equation}
\begin{equation}
|{\rm Im}\{\zeta\}| \leq Bc^3|{\rm Im}\{{\Phi}\}| \, ,\label{eq:hard-estimate-2}
\end{equation}
and
\begin{equation}
|1-\zeta| \leq  1-|\zeta| +Cc^4|{\rm Im}\{{\Phi}\}|^2 \, .\label{eq:hard-estimate-3}
\end{equation}
\end{lemma}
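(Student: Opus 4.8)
The plan is to reduce all three inequalities to exact rational identities in a few consolidated quantities and then bound those by one-variable estimates. Write $\Phi = \xi + i\eta$ with $\xi = {\rm Re}\{\Phi\}$ and $\eta = {\rm Im}\{\Phi\}$, and introduce $P = 1 + c^2|\Phi|^2 = 1 + c^2\xi^2 + c^2\eta^2$, $Q = 2c\eta$, and $R = 1 + c^2\Phi^2$. The first step is the algebraic simplification of the denominator of $\zeta$: since $|c\Phi \mp i|^2 = P \mp Q$, one gets
\[
|c\Phi - i|^4 + |c\Phi + i|^4 = (P-Q)^2 + (P+Q)^2 = 2(P^2 + Q^2),
\]
so that $\zeta = 3c^2 R^2/(P^2+Q^2)^2$. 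Together with $|R|^2 = |c\Phi-i|^2|c\Phi+i|^2 = (P-Q)(P+Q) = P^2 - Q^2$, this gives the clean modulus formula $|\zeta| = 3c^2(P^2-Q^2)/(P^2+Q^2)^2$, and in particular the crude bound $|\zeta| \le 3c^2$, which makes $\zeta$ small for small $c$.

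Next I would record the two ``defect'' formulas that drive \eqref{eq:hard-estimate-1} and \eqref{eq:hard-estimate-2}. From ${\rm Im}\,R = 2c^2\xi\eta$ and ${\rm Re}\,R = P - 2c^2\eta^2$ one has $|R|^2 - {\rm Re}(R^2) = 2({\rm Im}\,R)^2 = 8c^4\xi^2\eta^2$ and ${\rm Im}(R^2) = 2({\rm Re}\,R)({\rm Im}\,R) = 4c^2\xi\eta(P - 2c^2\eta^2)$. Multiplying by $3c^2/(P^2+Q^2)^2$ yields the exact expressions
\[
|\zeta| - {\rm Re}\{\zeta\} = \frac{24c^6\xi^2\eta^2}{(P^2+Q^2)^2}, \qquad {\rm Im}\{\zeta\} = \frac{12c^4\xi\eta(P-2c^2\eta^2)}{(P^2+Q^2)^2}.
\]
Since $|\zeta| \le 3c^2 \le 1$ for $c$ small, $|1 - {\rm Re}\{\zeta\}| = 1 - {\rm Re}\{\zeta\}$, and so \eqref{eq:hard-estimate-1} is exactly the assertion $|\zeta| - {\rm Re}\{\zeta\} \le Ac^4\eta^2$.

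The third step is to bound these rational functions by absolute constants, uniformly in $z$; this is the only point requiring care, because $\xi = {\rm Re}\,\Phi$ is unbounded on $\mathbb{D}$. The device is to use $(P^2+Q^2)^2 \ge P^4 \ge (1+c^2\xi^2)^4$ and then change variables. For \eqref{eq:hard-estimate-1}, setting $v = c^2\xi^2$ gives $(|\zeta| - {\rm Re}\{\zeta\})/(c^4\eta^2) \le 24v/(1+v)^4 \le A$ with $A = 24\sup_{v\ge 0} v/(1+v)^4 < \infty$. For \eqref{eq:hard-estimate-2}, one first checks $|P - 2c^2\eta^2| \le P$ (valid since $0 \le 2c^2\eta^2 \le 2P$), and then with $u = c|\xi|$ obtains $|{\rm Im}\{\zeta\}|/(c^3|\eta|) \le 12u/(1+u^2)^3 \le B$, another finite supremum; the case $\eta = 0$ is the trivial equality $0 = 0$.

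Finally, \eqref{eq:hard-estimate-3} follows from the first two without further computation. Since $1 - {\rm Re}\{\zeta\} \ge 1 - 3c^2 \ge 1/2$ for $c$ small, the elementary inequality $\sqrt{a^2+b^2} \le a + b^2/(2a)$ applied with $a = 1 - {\rm Re}\{\zeta\}$ and $b = {\rm Im}\{\zeta\}$ gives $|1-\zeta| \le (1 - {\rm Re}\{\zeta\}) + ({\rm Im}\{\zeta\})^2 \le (1 - |\zeta| + Ac^4\eta^2) + B^2c^6\eta^2 \le 1 - |\zeta| + Cc^4\eta^2$ with $C = A + B^2$. I expect the main obstacle to be purely organizational: carrying out the complex algebra that produces the consolidating identities $|c\Phi - i|^4 + |c\Phi+i|^4 = 2(P^2+Q^2)$ and $|R|^2 = P^2 - Q^2$ cleanly, since once these are in hand every inequality collapses to maximizing a single rational function of one variable.
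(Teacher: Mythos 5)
Your proof is correct and takes essentially the same route as the paper: both use $|\zeta|\le 3c^2<1$ to reduce \eqref{eq:hard-estimate-1} to an exact computation of $|\zeta|-{\rm Re}\{\zeta\}$, which carries the factor $c^6({\rm Re}\{\Phi\})^2|{\rm Im}\{\Phi\}|^2$, then compute ${\rm Im}\{\zeta\}$ exactly to extract the factor $c^4\,{\rm Re}\{\Phi\}\,{\rm Im}\{\Phi\}$, and finally deduce \eqref{eq:hard-estimate-3} from the first two estimates via the elementary bound on $|1-\zeta|$ for $\zeta$ near $0$. If anything, your write-up is slightly more complete, since you justify by explicit one-variable suprema the uniform boundedness of the coefficient functions (uniform in ${\rm Re}\{\Phi\}$, which is unbounded on $\mathbb{D}$), a point the paper asserts without detail.
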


\begin{proof}
It is clear that $|\zeta| < 1$ if $c$ is small, hence $|1-{\rm
Re}\{\zeta\}| = 1-{\rm Re}\{\zeta\}$. Thus
\eqref{eq:hard-estimate-1} amounts to
\begin{equation}
|\zeta|- {\rm Re}\{\zeta\} \leq Ac^4|{\rm
Im}\{{\Phi}\}|^2 \, . \label{eq:hard-estimate-1a}
\end{equation}
We have
\[
\begin{aligned}
|\zeta|-{\rm Re}\{\zeta\} &=
\frac{12c^2}{(|c{\Phi}-i|^4+|c{\Phi}+i|^4)^2}\left[\, |1+c^2{\Phi}^2|^2-{\rm
Re}\{(1+c^2{\Phi}^2)^2\}\right]\\
& = \frac{12c^6( |{\Phi}|^4-{\rm
Re}\{{\Phi}^4\})}{(|c{\Phi}-i|^4+|c{\Phi}+i|^4)^2}\\
& =
-\frac{6c^6({\Phi}^2-\overline{{\Phi}}^{\,2})^2}{(|c{\Phi}-i|^4+|c{\Phi}+i|^4)^2}\\
&= \frac{24c^6({\Phi}+\overline{{\Phi}})^2|{\rm
Im}\{{\Phi}\}|^2 }{(|c{\Phi}-i|^4+|c{\Phi}+i|^4)^2} \,
\end{aligned}
\]
which shows \eqref{eq:hard-estimate-1a} and thus
\eqref{eq:hard-estimate-1} because
$c^2(\Phi+\overline{\Phi})^2/(|c\Phi-i|^4+|c\Phi+i|^4)^2$ is
bounded for small $c$.

To establish \eqref{eq:hard-estimate-2}  observe that
\[
2i\,{\rm Im}\{(1+c^2{\Phi}^2)^2\} =
(1+c^2{\Phi}^2)^2-(1+c^2\overline{{\Phi}}^2)^2=
c^4({\Phi}^4-\overline{{\Phi}}^4)+2c^2({\Phi}^2-\overline{{\Phi}^2}) \, ,
\]
from which \eqref{eq:hard-estimate-2} follows directly. Finally,
\eqref{eq:hard-estimate-3} is a consequence of
\eqref{eq:hard-estimate-1} and \eqref{eq:hard-estimate-2}  because
for $\zeta=x+iy$ small then $|1-\zeta|
\leq |1-x|+2y^2$.

\end{proof}

\bibliography{hol-lift}

\end{document}